\theoremstyle{plain}
\newtheorem{thm}{Theorem}[section]
\newtheorem*{thm*}{Theorem}
\newtheorem{prop}[thm]{Proposition} 
\newtheorem{lem}[thm]{Lemma} 
\newtheorem{cor}[thm]{Corollary}
\newtheorem*{cor*}{Corollary}
\newtheorem{defi}[thm]{Definition}
\newcommand {\R} {\mathbb{R}} \newcommand {\Z} {\mathbb{Z}}
\newcommand {\T} {\mathbb{T}} \newcommand {\N} {\mathbb{N}}
\newcommand {\p} {\partial}
\renewcommand {\d} {\partial}
\newcommand {\dt} {\partial_t} 
\newcommand {\Id} {\textrm{Id}}
\DeclareMathOperator{\supp}{supp}
\DeclareMathOperator{\dv}{div}
\DeclareMathOperator{\sech}{sech}
\DeclareMathOperator{\dist}{dist}
\begin{document}
\title{On variable viscosity and enhanced dissipation}

\begin{abstract}
  In this article we consider the 2D Navier-Stokes equations with variable
  viscosity depending on the vertical position.
  As our main result we establish linear enhanced dissipation near the non-affine
  stationary states replacing Couette flow. Moreover it turns out that the
  shear flow overcompensates for weakening viscosity: decreasing viscosity leads to stronger enhanced dissipation and
  increasing viscosity leads to weaker dissipation than in the constant
  viscosity case.
\end{abstract}
\author{Xian Liao}
\address{Karlsruhe Institute of Technology\\ Englerstraße 2\\ 76131 Karlsruhe\\ Germany}
\email{xian.liao@kit.edu}
\author{Christian Zillinger}
\address{Karlsruhe Institute of Technology\\ Englerstraße 2\\ 76131 Karlsruhe\\ Germany}
\email{christian.zillinger@kit.edu}
\date{\today}
\maketitle


\section{Introduction}

In the present paper we are concerned with the  two-dimensional  incompressible
Navier-Stokes equations in the presence of large (stratified) viscosity variations
\begin{align}\label{NS}
\left\{
\begin{aligned} 
&\d_t v+v\cdot \nabla v-\dv(\mu  S v)+\nabla p=0,\\
&\dv v=0.
\end{aligned}
\right.
\end{align}
Here $t\in [0,\infty)$ and $\begin{pmatrix}x\\ y\end{pmatrix}\in \T\times\R$ denote
the time and space variables.
The vector-valued function $v=v(t,x,y): [0,\infty)\times\R^2\to\R^2$ and the
scalar function $p=p(t,x,y): [0,\infty)\times\R^2\to \R$ denote the unknown
velocity vector field and the unknown pressure of the two-dimensional
flow, respectively.
\noindent

The symmetric part of the velocity gradient  
\begin{align*}
  \frac12 Sv:=\frac12 (\nabla v+(\nabla v)^T)
\end{align*}
denotes the symmetric deformation  tensor.
The  viscosity coefficient
\begin{align*}
  \mu=\mu(x,y)
\end{align*}
is a given non-constant positive scalar function.
More precisely, we consider the case of stratified viscosity $\mu(y)$ depending
on the vertical direction only and study its interplay with 2D shear flows.

Viscous stratification is a typical phenomenon not only in nature (e.g. in the
atmosphere and ocean flows) but also in industrial application (e.g. in the chemical and food industry).
The (in)stabilities in  viscosity-stratified flows have attracted constant
interests of physicists \cite{Craik, GS, Heisenberg, Lin, HB, Yih}.
While additional dissipation at first sight suggest stabilization \footnote{The
  Orr-Sommerfeld eigenvalue Problem has only positive eigenvalues for Couette
  flows, which implies the stability of Couette flows for all Reynolds number,
  but experiments showed instability under small but finite perturbations.}, in
experiments viscosity exhibits dual roles \cite[Chapter 8, pp. 160]{Drazin}: a stabilizing role due to the dissipation of energy and a more subtle destabilizing role.
 Yih \cite{Yih} showed that the instability in a low Reynolds number flow can be
 caused by viscosity stratifications  (see also Craik \cite{Craik} for the study of flows with continuous viscosity stratification).
These results motivated decades of active researches on the instability caused by viscosity interfaces, see \cite{GS} for a review paper on this topic.

In this paper we consider the model \eqref{NS} of the fluids with equal
density/temperature but different viscosities, which can for instance be used to describe the transport of the highly viscous oil and an immiscible low viscous lubricant (see e.g. \cite{JRR, PV} for the relevant instability analysis).
We then study the asymptotic behavior of perturbations to the
shear flow solutions 
\begin{align}\label{shear}
\mu=\mu(y),\quad v=\begin{pmatrix} U(y)\\ 0\end{pmatrix},
\end{align}
which satisfies the hydrostatic balance
\begin{align}\label{balance}
\d_y(\mu\d_y U)=0.
\end{align}
This condition implies that as $\mu$ decreases $\p_y U$ increases
and vice versa.

As an additional assumption, while we allow $\mu$ and hence $\p_y U$ to change
by several orders of magnitude, we require that locally there is not too much oscillation: 
\begin{align}\label{bound:mu}
\Bigl\|\p_y \ln(\mu(y)) \Bigr\|_{W^{1,\infty}}\leq 0.001.
\end{align}
Thus, for instance $\mu$ may decrease exponentially but only with a small
exponent $c$.

As a consequence of the balance relation \eqref{balance} one observes that the variable viscosity coefficient changes the slope of the underlying velocity profile, such that the viscous stratification comes into play, even at high Reynolds numbers 
\footnote{The viscosity variations increase the order of the Orr-Sommerfeld
  equation from two to four, which makes a difference in the dynamics even at high
  Reynolds numbers (contrary to the intuitive expectation of negligible viscous
  effect).}. 
  More precisely, as we discuss following Theorem \ref{thm:main} as
the viscosity \emph{decreases} towards zero, the effective dissipation $\mu (\p_yU)^2$ rate becomes
\emph{larger}.
This also helps to explains wall heating or cooling techniques (corresponding to the liquid flows or gas flows respectively) in industrial application, which produce less viscous flow near the wall, and hence  stabilize the flows \cite{BG}.

In recent years there has been extensive research on the stability study of the shear flows \eqref{shear} for the inviscid fluids with
\begin{align*}
  \mu=0,
\end{align*}
and for the viscous fluids with constant viscosity
\begin{align*}
  \mu=\textrm{const.}>0.
\end{align*}
Since the literature is extensive, we here do not provide a complete overview
but refer the interested reader to the following recent works for further discussion
\cite{jia2019linear,lin2019metastability,Zhang2015inviscid,ionescu2018inviscid,widmayer2018convergence,yang2018linear,bedrossian2016enhanced,elgindi2015sharp,Lin-Zeng,WZZkolmogorov,bedrossian2017stability,bedrossian2013inviscid,deng2020stability,Lin-Zeng}.
We, in particular, recall that for linearized equations around Couette flow
\begin{align*}
  \mu=\textrm{const.}>0, U(y)=y,
\end{align*}
it can be shown by explicit calculations that the interplay of shearing and
dissipation leads to damping with a rate
\begin{align*}
  \exp(-C\sqrt[3]{\mu} t),
\end{align*}
and thus on a time scale $\mu^{-\frac{1}{3}}$ much smaller than the dissipation
time scale $\mu^{-1}$. This phenomenon is hence called \emph{enhanced
  dissipation} (see \cite{bedrossian2016sobolev} for further discussion and the
analysis of the nonlinear problem).

Given a particular value of the viscosity at a given point,
$\mu(y_0)$, in this paper we are interested in the change of the (local, effective) dissipation rates if $\mu$ varies as
$y>y_0$ increases. For instance, how much of an increase or decrease of $\mu$ is
required to change the dissipation rate by a factor of $10$?
As our main results we establish stability of the linearized equations and prove
damping with a local rate $\mu(U')^2$, which is \emph{inversely} proportional to $\sqrt[3]{\mu}$. 
\begin{thm}
  \label{thm:main}
  Let $\mu \in C^2(\R)$ with $\mu>0$ be a given stratified viscosity profile.
  Then a stationary solution of the Navier-Stokes equations \eqref{NS} on $\T\times \R$  in the presence of the variable viscosity $\mu$ is given by a shear flow $v=(U(y),0)^T$ such that
  \begin{align}
    \label{eq:hydrobalance}
      \mu \p_y U = \text{const.}
    \end{align}
  and the linearized equations around this solution in vorticity formulation
  read
  \begin{align}
    \label{eq:omega}
    \begin{split}
    \dt \omega + U \p_x \omega &= U'' v_2 + \dv(\mu \nabla \omega) - \dv(\mu' \nabla v_1) - \mu'' v_2, \\
    v&=\nabla^{\bot}(-\Delta)^{-1}\omega,
  \end{split}
  \end{align}
  where $U'=\p_y U$ and $U''=\p_y^2U$ denote $y$ derivatives.

  Additionally suppose that $\mu$ only varies gradually, in the sense that
  \begin{align}
  \label{eq:gradual}
    \begin{split}
    \|\frac{\mu'}{\mu}\|_{L^\infty} + \|U'\p_y \frac{\mu'}{\mu}\|_{L^\infty} &< 0.001, \\
    \frac{1}{\sup{\mu}} \|\mathcal{F}(\mu')\|_{L^1(\R \setminus(-\sup(\mu)^{-1/3}, \sup(\mu)^{-1/3}))}&< 0.001,
  \end{split}
  \end{align}
  where $\mathcal{F}(\mu)$ denotes the (tempered) Fourier transform and that $\mu$ is bounded above and below and sufficiently small so that
  \begin{align}
    \frac{(\sup(\mu))^2}{\inf(\mu)} < 0.1.
  \end{align}
  For instance, $\mu$ may grow at a (small) exponential rate from a value
  $\nu^2$ to $0.1 \nu$ with $\nu<0.1$. 
  For simplicity of presentation also assume that $U'\geq 1$ (which can be
  assumed without loss of generality after rescaling).
  Then the linearized equations \eqref{eq:omega} are stable and exhibit enhanced
  dissipation. More precisely, there exists a time-dependent family of
  operators $A(t)$ with
  \begin{align*}
   0.1 \|\omega(t)\|_{L^2(U' dy)} \leq \|A(t)\omega(t)\|_{L^2(U' dy)}\leq \|\omega(t)\|_{L^2(U' dy)}.
  \end{align*}
  Furthermore, if $\int \omega_0 dx=0$ then for all times $t>0$ it holds that
  \begin{align*}
    \frac{d}{dt} \|A(t)\omega(t)\|_{L^2(U'dy)}^2 \leq -0.001 (\|(\mu (U')^2))^{1/6}\omega\|^2 + \|\sqrt{U'}v\|^2).
  \end{align*}
  Moreover, under further regularity assumptions these results also extend to stability of the ``profile''
  $W(t,x,U(y)):=\omega(t,x-tU(y),y)$ in higher Sobolev norms $H^N$ (see
  Proposition \ref{prop:HN} for a precise statement).
\end{thm}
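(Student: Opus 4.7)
My plan is to reduce to Fourier modes in $x$, pass to a semi-Lagrangian frame adapted to the shearing, and construct a Lyapunov functional built from a time-dependent Fourier multiplier $A(t)$ whose commutator with the linearized dynamics produces the enhanced dissipation rate $(\mu(U')^2)^{1/3}$. Since \eqref{eq:omega} is $x$-translation invariant, I first decompose $\omega(t,x,y) = \sum_k \hat{\omega}_k(t,y) e^{ikx}$ and treat each mode $k \neq 0$ separately (the hypothesis $\int \omega_0 \, dx = 0$ kills the $k=0$ mode, consistent with $v_0 \equiv 0$ from Biot--Savart). Because $U' \geq 1$, the map $y \mapsto z := U(y)$ is a bi-Lipschitz change of coordinates with $U'\, dy = dz$, so passing to the profile $f_k(t,z) := e^{iktz}\hat{\omega}_k(t, U^{-1}(z))$ eliminates the transport $ikU\hat{\omega}_k$. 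In Fourier with $\eta$ dual to $z$, the principal dissipation $\p_y(\mu\p_y\omega)$ then acts as multiplication by $\mu(U')^2(k^2+(\eta-kt)^2)$, the familiar shear-amplified diffusion symbol.

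\textbf{Construction of the multiplier.} The heart of the proof is to build $A(t)$ as a time-dependent symbol $m(t,k,\eta,y)$ satisfying a ghost-type ODE of the form $\dot m/m \lesssim -(\mu(U')^2 k^2)^{1/3}$ in the regime where the principal dissipation is small (i.e.\ when $|\eta-kt|$ is small relative to the enhanced-dissipation scale) and $\dot m/m \approx 0$ otherwise. Following the strategy of Chemin--Masmoudi and Bedrossian--Coti Zelati, normalized so that $0.1 \leq m \leq 1$ on all scales, this produces an operator equivalent to the identity in $L^2(U' dy)$ as required. Differentiating $\|A\omega\|_{L^2(U'dy)}^2 = \|m f\|_{L^2(dz)}^2$ using the profile equation, the transport term vanishes by construction, the principal dissipation gives a nonnegative contribution, and combined with $2\dot m/m$ produces the desired rate $-0.001\|(\mu(U')^2)^{1/6}\omega\|^2$. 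Note that the asymmetry advertised in the introduction is already present here: the hydrostatic balance $\mu U' = \text{const}$ gives $\mu(U')^2 = C^2/\mu$, so the effective rate grows when $\mu$ shrinks.

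\textbf{Main obstacle.} The delicate step is controlling the variable-viscosity forcing $U'' v_2 - \dv(\mu'\nabla v_1) - \mu'' v_2$: these terms are nonlocal in $\omega$ through $v = \nabla^\perp(-\Delta)^{-1}\omega$, and their coefficients $\mu', \mu'', U''$ do not commute with the shear-adapted multiplier. My strategy is to split $\mu'$ into low- and high-frequency parts relative to the scale $\sup(\mu)^{-1/3}$. The low-frequency part is absorbed using the smallness $\|\mu'/\mu\|_{L^\infty} + \|U'\p_y(\mu'/\mu)\|_{L^\infty} < 0.001$, which says that $\mu$ varies on scales much larger than the relevant dissipation scale, so derivative costs are paid by the main term. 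The high-frequency part is treated via Young's convolution inequality in the frequency variable using the hypothesis that $\|\mathcal{F}(\mu')\|_{L^1}/\sup\mu$ is small outside $(-\sup(\mu)^{-1/3},\sup(\mu)^{-1/3})$; this is precisely the Fourier localization required so that multiplication by $\mu'$ cannot scatter $v$ out of the regime in which $A$ is effective. Whatever residue cannot be subsumed into $\|(\mu(U')^2)^{1/6}\omega\|^2$ is absorbed by the Biot--Savart term $\|\sqrt{U'} v\|^2$, which is why this term appears on the right-hand side of the target inequality.

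\textbf{Higher regularity.} For the extension to Proposition \ref{prop:HN}, I would commute $\p_x^N$ and $\p_y^N$ through the profile equation (which no longer carries the large transport) and repeat the energy estimate with a higher-regularity variant of $A$, using the $C^2$ regularity of $\mu$ together with the bounds on $U'$ and $\mu'/\mu$ to estimate the commutators uniformly in $N$. I expect the construction of $A$ to transfer essentially unchanged, with the error analysis above providing the template for controlling commutators with $\mu'$ and $U''$ at the $H^N$ level.
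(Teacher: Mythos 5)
Your setup (Fourier decomposition in $x$, the change of variables $z=U(y)$, the shifted profile, and a time-dependent multiplier with a ghost-type decay at rate $(\mu(U')^2)^{1/3}$ near the resonant frequencies $|\eta-kt|$ small) matches the paper's model case, but it misses the central difficulty of the theorem: $\mu$, and hence $U'$, is only controlled \emph{relatively} ($\|\mu'/\mu\|_{W^{1,\infty}}$ small) and may vary by many orders of magnitude, so there is no single global rate and the map $y\mapsto z=U(y)$ is \emph{not} bi-Lipschitz with comparable constants ($U'\geq 1$ is only a lower bound; $U'=C/\mu$ is unbounded as $\mu$ becomes small). A single Fourier multiplier built from global quantities such as $\inf\mu(U')^2$ would only yield the worst-case rate, not the advertised local rate $(\mu(U')^2)^{1/6}$ inside the dissipation functional. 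You gesture at a remedy by letting the symbol depend on $y$ as well, $m(t,k,\eta,y)$, but such an object is no longer a Fourier multiplier, and your proposal gives no mechanism for controlling its commutators with the transport, the variable-coefficient dissipation, or the nonlocal Biot--Savart operator. The paper resolves exactly this by a partition of $\R$ into intervals $I_j$ on which $\sup\mu/\inf\mu\leq 100$ (possible because $\mu'/\mu$ is small), constant-extended profiles $\mu_j,U_j$, localized multipliers $A_j$ acting on $\chi_j W$, and an energy $E(t)=\sum_j\|A_jW_j\|^2$; the real work (Lemmas \ref{lem:dissipation2}--\ref{lem:viscosity}) is then to show that the commutators with the cutoffs and, above all, the nonlocal coupling between different intervals through the Biot--Savart law can be absorbed after summing over $j$. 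Nothing in your outline addresses this gluing step, so the claimed local decay rate is not reached.

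A second, smaller but substantive issue is the treatment of the linearization error $U''v_2$. When the effective viscosity is small this term cannot be absorbed by the dissipation at all; the paper's multiplier therefore decays in the resonant region at the additional rate $u/(1+u^2(\xi/k-t)^2)$ (an inviscid, integrable-in-time mechanism), and it is precisely this decay that both controls $U''v_2$ and produces the $\|\sqrt{U'}v\|^2$ contribution in the final inequality. Your proposal's multiplier ODE contains only the dissipative rate, and your statement that leftover errors are ``absorbed by the Biot--Savart term $\|\sqrt{U'}v\|^2$'' reads the theorem backwards: that term is part of the guaranteed decay on the right-hand side, not a resource available for absorbing errors; it has to be generated by the weight. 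Without building this velocity component into $A(t)$, the estimate for $U''v_2$ (and similarly for $\mu''v_2$) fails in the small-viscosity regime.
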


Let us comment on these results:

\begin{itemize}
\item We remark that due to the balance relation \eqref{eq:hydrobalance}
  it holds that 
  \begin{align*}
    \mu U' =\text{const.} =: \sigma.
  \end{align*}
  Hence, one observes that the local dissipation rate satisfies
  \begin{align*}
    \sqrt[3]{\mu (U')^2}=  \sqrt[3]{\mu (U')^2 \mu \frac{1}{\mu}} =  \sqrt[3]{\sigma^2} \frac{1}{\sqrt[3]{\mu(y)}} 
  \end{align*}
  and thus is proportional to $\mu(y)^{-1/3}$.
  Thus a \emph{decrease} of $\mu$ by a factor $1000$
  corresponds to an \emph{increase} of the dissipation rate by a factor $10$.
  Conversely, \emph{increasing} the viscosity compared to $\mu(y_0)$ corresponds
  to \emph{weaker} dissipation.
\item Our assumptions on $\mu$ ensure that the effective
  dissipation rate $\mu (U')^2$ is always smaller than one. The dependence in terms of a
  third root hence reflects the enhancement of the mixing rate due to shear.
\item The nonlinear constant viscosity Navier-Stokes problem has been studied in
  \cite{bedrossian2016sobolev}. This article extends these results in the
  linearized case to the stratified
  viscosity problem. In particular, we extend the by now common
  Cauchy-Kowalewskaya approach to the setting where $U'(y)$ and $\mu(y)$ may
  vary by many orders of magnitude (but may do so only gradually). We expect these methods to be
  of interest of their own for the wider community and applicable also to
  other related problems (e.g. the variable viscosity Boussinesq equations). 
\item Unlike in the constant viscosity setting for the shear flow considered in
  this article the second derivative of the shear $U''$ is non-trivial and does
  not approach zero under the (variable viscosity) heat flow. Hence, as in the inviscid setting
  \cite{Zill3} we require a smallness condition to control the correction term
  $U''v_2$ in the linearized equation (this condition further allows us to
  control derivatives of the viscosity). This motivates our assumption
  \eqref{eq:gradual} (see Section \ref{sec:lemmas} for further discussion).
\item We remark that in view of \eqref{eq:hydrobalance} the shear flow $U$ is
  strictly monotone and hence invertible (but $U'$ might be very large). In our
  analysis it will prove advantageous to thus equivalently consider the variable
  $z=U(y)$ and the profile $W$ moving with the shear. Moreover, stability is
  most naturally phrased in terms of spaces $L^2(dz) \simeq L^2(U'dy)$.
\item The first condition in \eqref{eq:gradual} allows $\mu$ to grow
  exponentially but only with a small exponent. In particular, this implies that
  level sets of the form $\{y: 10^{j}< \mu(y) < 10^{j+1}\}$ for $j\in \Z$ are
  bounded below in size, which we exploit in a partitioning construction in
  Section \ref{sec:glue}.
  The second condition in \eqref{eq:gradual} should be understood as a
  regularity assumption on the relative change $\frac{\mu}{\sup(\mu)}$, which
  should decay at very large frequencies (larger than $\sup(\mu)^{-1/3}$).
\end{itemize}

As we discuss in the following corollary the weighted dissipation estimate implies
exponential decay, if $\omega$ remains suitably localized. 
\begin{cor}
  Let $W(t,x,U(y)):=\omega(t,x-tU(y),y)$ be as in Theorem \ref{thm:main}, let $M \subset \R$ and suppose that on a given time interval $(t_1,t_2)\subset (0,\infty)$ a fraction at least $\theta \in (0,1)$
  of the $L^2$ energy is localized in $M$. That is,
  \begin{align*}
    \|W(t)\|_{L^2(M)}^2 \geq \theta \|W(t)\|_{L^2}^2.
  \end{align*}
  Then for all $t \in (t_1,t_2)$ it holds that
  \begin{align*}
    \|W(t)\|_{L^2}^2\leq \exp(-0.001 \theta \min_{M}\sqrt[3]{\mu (U')^2}(t-t_1))\|W(t_1)\|_{L^2}^2
  \end{align*}
\end{cor}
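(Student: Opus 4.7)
The plan is to apply the weighted dissipation estimate from Theorem \ref{thm:main} and translate it into exponential decay for $\|W(t)\|_{L^2}^2$ via the change of variables $z=U(y)$. Since \eqref{eq:hydrobalance} makes $U$ strictly monotone, the $x$-translation invariance on $\T$ gives
\begin{align*}
\|W(t)\|_{L^2(\T\times\R)}^2 = \|\omega(t)\|_{L^2(U'dy)}^2,
\end{align*}
and the localization hypothesis on $M\subset\R$ pulls back to the lower bound
\begin{align*}
\int_{U^{-1}(M)}\int_\T |\omega|^2 U'\, dx\, dy \geq \theta \|\omega(t)\|_{L^2(U'dy)}^2.
\end{align*}

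Next I would localize the dissipation term produced by Theorem \ref{thm:main}. Bounding the pointwise factor $(\mu(U')^2)^{1/3}$ below by its minimum over $U^{-1}(M)$ and discarding the nonnegative $\|\sqrt{U'}v\|^2$ contribution yields
\begin{align*}
\frac{d}{dt}\|A(t)\omega(t)\|_{L^2(U'dy)}^2 \leq -0.001\,\theta\,\min_M\sqrt[3]{\mu(U')^2}\,\|\omega(t)\|_{L^2(U'dy)}^2.
\end{align*}
The two-sided equivalence $0.1\|\omega\|_{L^2(U'dy)}\leq \|A(t)\omega\|_{L^2(U'dy)}\leq \|\omega\|_{L^2(U'dy)}$ then lets me replace the right-hand side by a multiple of $-\|A\omega\|^2$, after which Grönwall closes the estimate and the same equivalence converts the result back to an exponential decay bound on $\|W\|_{L^2}^2$, matching the claim up to adjusting the numerical constants.

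The principal subtlety I would flag is the bookkeeping of the $A(t)$-equivalence constants: passing from $\|A\omega\|^2$ back to $\|\omega\|^2\simeq\|W\|^2$ costs a factor $100$, which must either be absorbed into the constant $0.001$ or accounted for by a small initial loss. I would also verify that the zero-mean hypothesis $\int \omega\,dx=0$ required by Theorem \ref{thm:main} is in force, which follows from the fact that $W$ and $\omega$ differ only by an $x$-translation (so share the same $x$-mean) and that integrating \eqref{eq:omega} in $x$ produces a linear heat-type equation for the mean that propagates the zero-mean condition in time.
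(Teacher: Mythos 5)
Your argument follows essentially the same route as the paper's own (very terse) proof: invoke the weighted dissipation estimate of Theorem \ref{thm:main}, bound the dissipation term from below on $M$ by $\theta\min_M\sqrt[3]{\mu(U')^2}$ times the full $L^2$ norm using the localization hypothesis, and close with Grönwall via the equivalence $0.1\|\omega\|\leq\|A\omega\|\leq\|\omega\|$. Your flagged subtlety about the factor $100$ incurred when converting between $\|A\omega\|^2$ and $\|W\|^2$ is legitimate and is in fact glossed over in the paper's proof, so it reflects care rather than a gap in your argument.
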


\begin{proof}
  By Theorem \ref{thm:main} it holds that
  \begin{align*}
    \dt \|W(t)\|_{L^2}^2 \leq -0.01 \| c W\|_{L^2}^2
  \end{align*}
  with $c:=(\mu (U')^2)^{1/6}$.
  By assumption the left-hand-side can be bounded from above by
  \begin{align*}
    - 0.001 \min_{M} c \ \theta \|A W(t)\|^2,
  \end{align*}
  which yields the result.
\end{proof}

We stress that the time interval considered in this corollary might be very small
if $M$ is a region with a very fast effective decay rate, since then the $L^2$
energy (or enstrophy) in that region can be expected to decay much faster than in other regions
and hence will correspond to a much smaller fraction than $\theta$ after a time.

Based on the local dissipation rate at first sight one might also conjecture an
estimate of the form 
\begin{align*}
  \|\exp(\sqrt[3]{\mu (U')^2}t) W(t)\|_{L^2}\leq C \|W(0)\|_{L^2}.
\end{align*}
to hold.
However, such an estimate cannot be expected to hold in general, since the
Biot-Savart law is non-local and not decaying quickly enough.
More precisely, if $W$ is highly localized in a region $M$,
then the velocity field generated by $W$ exhibits decay away from $M$ in terms
of a power law of the distance $\dist(y, M)$. Hence, supposing for the moment
that $W$ remains localized and that $M$ is a region with small decay rate, we
expect $W$ to decay with slower rate. In particular, if $M'$ is a different
region with much higher damping rate, then the decay of the Biot-Savart law in
terms of $\dist(M, M')$ is not sufficiently strong to compensate
for the difference in dissipation rates.

The remainder of our article is structured as follows:
\begin{itemize}
\item In Section \ref{sec:notation} we introduce function spaces, changes of
  variables and notational conventions used throughout the article.
\item As a first model setting in Section \ref{sec:model2} we establish linear $L^2$
  stability for the case when $\mu$ varies only by a bounded factor. This allows
  us to more clearly present the main tools of our proofs and discuss the
  necessity of assumptions.
\item In Section \ref{sec:glue} we extend these $L^2$ stability results to the
  general setting by constructing local versions of several estimates. Here the
  non-local structure of the Biot-Savart law and the interaction of the
  localization and dissipation require careful analysis.
\item Using the linear $L^2$ stability results as a building block, in Section
  \ref{sec:proofmain} we establish linear stability in $H^N$ and thus prove
  Theorem \ref{thm:main}.
\end{itemize}

\section{Stationary Solutions and Notation}
\label{sec:notation}

In this section we establish that the shear flow $U$ given in Theorem
\ref{thm:main} indeed is a stationary solution. Furthermore, we derive the
linearized equation around this state in vorticity formulation.

In our analysis of the Navier-Stokes equations it is often convenient to work in Lagrangian coordinates moving with the underlying shear
flow $(U(y),0)$.
Moreover, since we assume that $U$ is strictly monotone there exists a change of
coordinates $y \mapsto z=U(y)$ which straightens out the flow lines.
For easier reference the equivalent formulations of the equations with respect
to these coordinates are also collected in this section. Moreover, we define
Sobolev spaces and multipliers with respect to $z$.

We remark already here that this construction requires further refinement for the general
situation, but provides a good description if one additionally assumes that $\mu$ is globally
comparable to a constant, which is the model setting of Section
\ref{sec:model2}.
In Section \ref{sec:glue} we replace this global change of variables by a family
of suitably localized coordinate changes, which accounts for the fact that $\mu$
and hence $\p_y U$ may change by many orders of magnitude.

\begin{lem}[Stationary solution]
  \label{lem:stationary}
  Let $\mu \in C^2(\R)$ be a given function with $\mu>0$, let $C \in \R\setminus\{0\}$ and define
  \begin{align*}
    U(y) = \frac{C}{\mu(y)}.
  \end{align*}
  Then $v(x,y)=(U(y),0) \in C^2(\R^2; \R^2)$ is a stationary solution of the
  Navier-Stokes equations with viscosity $\mu$.

  The linearized equations in vorticity formulation around this solution are
  given by
  \begin{align}
    \label{eq:1}
    \begin{split}
    \dt \omega + U(y) \p_x \omega - U'' v_2 &= \dv(\mu \nabla \omega) - \dv(\mu' \nabla v_1) - \mu'' \p_x v_2, \\
    v & =\nabla^{\bot} \Delta^{-1} \omega
  \end{split}
  \end{align} 
\end{lem}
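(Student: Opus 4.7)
The statement has two parts: verifying that $(U(y),0)^T$ is a stationary solution, and deriving the vorticity formulation of the linearized equations. For stationarity I would substitute $v=(U(y),0)^T$ directly into \eqref{NS}. Since $U$ depends only on $y$, incompressibility $\dv v = \p_x U = 0$ and the convective derivative $(v\cdot\nabla)v = U\p_x v = 0$ vanish automatically. The symmetric gradient has only the off-diagonal entries $U'$, so
\[
\mu Sv = \begin{pmatrix} 0 & \mu U' \\ \mu U' & 0 \end{pmatrix}.
\]
Invoking the hydrostatic balance $\mu U' = \text{const}$, which (up to an integration constant) is precisely what the definition of $U$ in the lemma amounts to, both rows of $\mu Sv$ are constant and hence $\dv(\mu Sv) = 0$; one may take $p\equiv 0$.

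For the linearization I would write $v = (U,0)^T + u$ with $u = (v_1,v_2)$ the perturbation, subtract the stationary equation, and drop quadratic terms in $u$. This produces
\[
\dt u + U\p_x u + v_2 U' e_1 + \nabla p = \dv(\mu S u), \qquad \dv u = 0.
\]
Applying the two-dimensional curl $\p_x(\cdot)_2 - \p_y(\cdot)_1$ eliminates the pressure by equality of mixed partials. The transport contribution expands to $\p_x(U\p_x v_2) - \p_y(U\p_x v_1) = U\p_x\omega - U'\p_x v_1$, while the stretching term $-\p_y(v_2 U')$ produces $-U''v_2 - U'\p_y v_2$; the two $U'$ terms cancel by $\p_x v_1 + \p_y v_2 = 0$, leaving the desired left-hand side $\dt\omega + U\p_x\omega - U''v_2$.

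The substantive step is recognizing the viscous right-hand side in divergence form. Starting from the identity $\dv(\mu Su)_i = \mu\Delta u_i + \mu'(Su)_{i2}$, so that the first component is $\mu\Delta v_1 + \mu'(\p_x v_2 + \p_y v_1)$ and the second is $\mu\Delta v_2 + 2\mu'\p_y v_2$, the curl produces a clean $\mu\Delta\omega$ piece together with several $\mu'$ and $\mu''$ cross terms. The crucial cancellation is $2\mu'\p_x\p_y v_2 - \mu'(\p_x\p_y v_2 + \p_y^2 v_1) = \mu'(\p_x\p_y v_2 - \p_y^2 v_1) = \mu'\p_y\omega$, which immediately allows the regrouping $\mu\Delta\omega + \mu'\p_y\omega = \dv(\mu\nabla\omega)$. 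The remaining terms similarly assemble into $\mu'\Delta v_1 + \mu''\p_y v_1 = \dv(\mu'\nabla v_1)$, with a lone $\mu''\p_x v_2$ surviving outside any divergence and giving the stated form. I expect this bookkeeping of $\mu'$ and $\mu''$ cross terms to be the only real obstacle; once the single cancellation above is spotted, the identification with $\dv(\mu\nabla\omega) - \dv(\mu'\nabla v_1) - \mu''\p_x v_2$ is essentially algebraic.
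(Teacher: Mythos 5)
Your proposal is correct and takes essentially the same route as the paper: stationarity is checked through the hydrostatic balance $\mu U'=\mathrm{const.}$ (the paper allows the slightly more general balance $\p_y(\mu\p_y U)=C_0$ with a pressure linear in $x$ before setting $C_0=0$), and the vorticity form is obtained by applying the two-dimensional curl to $\dv(\mu S\,\cdot)$ and regrouping the $\mu'$, $\mu''$ terms into $\dv(\mu\nabla\omega)-\dv(\mu'\nabla v_1)-\mu''\p_x v_2$, the only cosmetic difference being that the paper takes the curl of the nonlinear equation first (recording the intermediate form $\Delta(\mu\omega)-2\mu''\p_x v_2$) and then linearizes, while you linearize first. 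Note also that, exactly like the paper's own proof, you implicitly read the definition of $U$ as the balance relation $\mu\p_y U=\mathrm{const.}$ (i.e.\ $\p_y U=C/\mu$) rather than the literal $U=C/\mu$, which is clearly the intended statement.
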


\begin{proof}[Proof of Lemma \ref{lem:stationary}]
  Following Theorem \ref{thm:main} we make the ansatz
 \begin{align}\label{shear}
\mu=\mu(y),\quad v=\begin{pmatrix} U(y)\\ 0\end{pmatrix}.
\end{align}
The Navier-Stokes equations \eqref{NS} then reduce to the following equations
\begin{align*}
\begin{pmatrix}
-\d_{y}(\mu\d_{y} U)+\d_{x}p
\\
\d_{y}p\end{pmatrix}=\begin{pmatrix}0\\0\end{pmatrix}.
  \end{align*}
The second equation $\d_y p=0$ implies $p=P(x)$ for some function $P$ depending
only on $x$, while $\p_y (\mu \p_y U)$ depends only on $y$. Hence, both
functions need to equal a common constant, which yields the hydrostatic balance relation
\begin{align}\label{balance}
\d_y(\mu\d_y U)=C_0
\end{align}
 and $p=P(x)=C_0x+C_1$, where $C_0, C_1\in\R$ are constants.
In particular, specializing to the case $C_0=0$, we verify that our choice of
$U$ yields a stationary solution.

If we also allow for $C_0$ to be possibly non-trivial there are many solutions
of potential interest:
\begin{itemize}
\item The Uniform flow: $U=\text{const}$.
\item The Couette flow: $U=y$, with $\mu=\text{const.}$ or $\mu=C_0y+C_2$.
\item The Poiseuille flow: $U=y(1-y)$, with $\mu=\text{const.}$, $y\in [0,1]$.
\item The shear layer: $U=\tanh(y)$, with $\mu=\sech^{-2}(y)$.
\item The jet or wake: $U=\sech^2(y)$, with $\mu=y\cosh^2(y)\coth(y)$.
\end{itemize}
In this article we restrict to the case $C_0=0$ since then for non-vanishing
viscosity the (non-trivial) shear flow $U$ has no critical points, which would
pose an obstacle to damping estimates.
Furthermore, in view of physical applications we additionally assume that the effective
damping rate $\mu(\p_yU)^2$ is not large.

In the following let $U,\mu$ be solutions of \eqref{balance} which hence are
solutions of the Navier-Stokes equations in velocity formulation.
We may then obtain the equation for the vorticity
\begin{align*}
  \omega=\nabla^\perp\cdot v,\quad \hbox{ with }\nabla^\perp =\begin{pmatrix}-\d_y\\ \d_x\end{pmatrix},
\end{align*}
by applying the operator $\nabla^\perp\cdot$ to the velocity equation \eqref{NS}.
Notice that
\begin{align*}
&\dv(\mu Sv)=\begin{pmatrix}2\d_x \mu \d_x v_1& \d_y\mu\d_y v_1+\d_y\mu\d_x v_2\\
\d_x\mu\d_x v_1+\d_x\mu\d_x v_2& 2\d_y\mu\d_y v_2\end{pmatrix},
\\
&  v=\nabla^\perp\Delta^{-1}\omega=\begin{pmatrix}-\d_y\Delta^{-1}\omega\\ \d_x\Delta^{-1}\omega\end{pmatrix}.
\end{align*}
We may calculate(see also \cite{HL})
\begin{align*}
\nabla^\perp\cdot\dv(\mu Sv)
&=[(\d_{yy}-\d_{xx}) \mu (\d_{yy}-\d_{xx}) +(2\d_{xy})\mu (2\d_{xy} )]\Delta^{-1}\omega 
\end{align*}
which can be equivalently expressed as
\begin{align*}
\Delta(\mu\omega)-2\mu''\d_xv_2=\dv(\mu\nabla \omega)-\dv(\mu'\nabla v_1)-\mu''\d_xv_2.
\end{align*} 
Thus we arrive at the vorticity formulation for the Navier-Stokes equations with
viscosity $\mu$:
\begin{align}\label{vorticity}
\d_t\omega+v\cdot\nabla\omega=\Delta(\mu\omega)-2\mu''\d_xv_2\equiv\dv(\mu\nabla \omega)-\dv(\mu'\nabla v_1)-\mu''\d_xv_2.
\end{align}

Finally, we linearize the vorticity equation \eqref{vorticity} around this shear flow to arrive at the following linearized equation
 \begin{align}\label{omega}
\d_t\omega+U(y)\d_x\omega-U''(y)v_2=\Delta(\mu\omega)-2\mu''\d_xv_2\equiv\dv(\mu\nabla \omega)-\dv(\mu'\nabla v_1)-\mu''\d_xv_2 .
 \end{align}
 
\end{proof}

In the following we introduce some equivalent reformulations of linearized equations \eqref{eq:1} in order to simplify our notation.
We first observe that in the equations \eqref{eq:1}
  \begin{align*}
    \begin{split}
    \dt \omega + U(y) \p_x \omega - U'' v_2 &= \dv(\mu \nabla \omega) - \dv(\mu' \nabla v_1) - \mu'' \p_x v_2, \\
    v & =\nabla^{\bot} \Delta^{-1} \omega
  \end{split}
  \end{align*} 
  all coefficient functions do not depend on $x$ explicitly.
  
  Hence the evolution of the $x$-average of the vorticity which we denote by $\omega_{=}$
  decouples as
  \begin{align*}
    \dt \omega_=&= \p_y (\mu \p_y \omega_{=}) + \p_y (\mu'\omega_{=})=\p_{yy}(\mu\omega_=). 
  \end{align*}
  The $x$ average hence evolves as in a variable coefficient heat equation and does not
  influence the evolution of the orthogonal complement
  \begin{align*}
    \omega_{\neq}=\omega- \omega_{=}.
  \end{align*}
  For this reason we in the following without loss of generality assume that
  initially
  \begin{align*}
    \omega_{=}=0, 
  \end{align*}
   which then remains the case for all times.

  As another consequence of the lack of $x$-dependence, the equations decouple
  after a Fourier transform in $x$, which we denote by
  \begin{align*}
    \hat\omega(t,k,y)=\frac{1}{2\pi}\int e^{-ikx}\omega(t,x,y)dx.
  \end{align*}
  Our equations read:
  \begin{align*}
    \d_t \hat\omega+ikU(y)\hat\omega-U''(y)\frac{ik}{-k^2+\d_{yy}}\hat\omega
    =(-k^2+\d_{yy})(\mu\hat\omega)+2\mu''\frac{k^2}{-k^2+\d_{yy}}\hat\omega.
  \end{align*}
  We  may further consider the vorticity moving with the underlying shear 
\begin{align*}
  \widetilde W(t,x,y)=\omega(t,x+tU(y),y).
\end{align*}
Expressed in Fourier variables it holds that
\begin{align*}
  \mathcal{F}_x \widetilde W(t,k,y)=e^{iktU(y)}\hat\omega(t,k,y),
\end{align*}
and hence
\begin{align}\label{tildeW}
  \begin{split}
&\d_t (\mathcal{F}_x\widetilde W)-\frac{ikU''(y)}{-k^2+(\d_y-iktU'(y))^2}(\mathcal{F}_x\widetilde W)
\\
& =(-k^2+(\d_y-iktU'(y))^2)(\mu\mathcal{F}_x\widetilde W)  +\frac{2\mu''k^2}{-k^2+(\d_y-iktU'(y))^2}(\mathcal{F}_x\widetilde W).
\end{split}
\end{align}

As a final step we observe that by assumption $U(y)$ is strictly monotone and
hence there exists a change of variables $y \mapsto z=U(y)$, which serves as our
main formulation in the following.

\begin{lem}[Vorticity formulation]
\label{lem:z}
  Let $U, \mu$ be as in Theorem \ref{thm:main} and consider the linearized
  equations in vorticity formulation.
  Further denote the change of coordinates $y \mapsto z=U(y)$ and define
  \begin{align*}
    W(t,x,z)= \omega(t,x+tU(y),y).
  \end{align*}
  Then $\omega$ solves \eqref{eq:1} if and only if for every $k \in \Z$ the
  Fourier transform of $W$ with respect to $x$ solves:
  \begin{align}
    \begin{split}
  &  \d_t (\mathcal{F}_xW)-\frac{ikU''}{-k^2+(U'(\d_z-ikt))^2}(\mathcal{F}_xW)
  \\
  &=(-k^2+(U'(\d_z-ikt))^2)(\mu\mathcal{F}_xW) +\frac{2\mu''k^2}{-k^2+(U'(\d_z-ikt))^2}(\mathcal{F}_xW),
\end{split}
    \end{align}
    where with slight abuse of notation coefficient functions are evaluated in
    $y$ such that $z=U(y)$, e.g. $U''= \p_y^2U|_{y=U^{-1}(z)}$.
    In other words, by introducing
    \begin{equation}\label{nablat}
    \nabla_t:=\begin{pmatrix} \d_x\\ U' (\d_z-t\d_x)\end{pmatrix},
    \end{equation}
    we may write the above equation for $\mathcal{F}_xW$ as
  \begin{align}\label{Equation:W}
    \begin{split}
  &  \d_t W-U''   V_2
  =\dv_t (\mu \nabla_t  W) 
  -\dv_t(\mu' \nabla_t   V_1)
  -\mu'' \d_x V_2.
\end{split}
    \end{align}
    where
    \begin{align*}
      V_1&=\frac{-U'(\d_z-t\d_x)}{\d_x^2+(U'(\d_z-t\d_x))^2}  W, \\
    \quad   V_2&=\frac{\d_x}{\d_x^2+(U'(\d_z-t\d_x))^2} W.
    \end{align*}

\end{lem}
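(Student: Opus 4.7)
My plan is to carry out the two substitutions in sequence and verify the transformed equation term by term. The statement is essentially equivalent to the intermediate Fourier formulation \eqref{tildeW} already appearing in the text; what remains is to apply the diffeomorphism $y\mapsto z=U(y)$ to it and read off the result.

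First, I would record the operator effect of the comoving substitution $\widetilde W(t,x,y)=\omega(t,x+tU(y),y)$: after Fourier in $x$, conjugation by the phase $e^{iktU(y)}$ sends $\p_y$ to $\p_y - iktU'(y)$, sends $\p_t + U(y)\p_x$ to $\p_t$, and leaves multiplication by $ik$ untouched. Consequently $\p_y^2$ is sent to $(\p_y - iktU')^2$ as operator composition, and the full Laplacian to $-k^2 + (\p_y - iktU')^2$. Applied to \eqref{eq:1} term by term this produces exactly \eqref{tildeW}, as the paper already records.

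Next I would apply the change of variables $y\mapsto z=U(y)$. Since $\p_y = U'(y)\p_z$ with $U'$ henceforth understood as a function of $z$ via $y = U^{-1}(z)$, the operator $\p_y - iktU'(y)$ becomes $U'(\p_z - ikt)$, which is exactly the Fourier symbol of the second component of $\nabla_t$ defined in \eqref{nablat}. All $y$-dependent coefficients $\mu,\mu',\mu'',U',U''$ pass through as scalar multipliers depending on $z$ and commute with $\p_x$. Term by term, the dissipative right-hand side $\dv(\mu\nabla\omega)-\dv(\mu'\nabla v_1)-\mu''\p_x v_2$ of \eqref{eq:1} thus becomes $\dv_t(\mu\nabla_t W)-\dv_t(\mu'\nabla_t V_1)-\mu''\p_x V_2$, while the left-hand side $\p_t\omega + U\p_x\omega - U''v_2$ becomes $\p_t W - U''V_2$, where $V_j$ denotes the image of $v_j$ under the combined substitution.

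It remains to identify $V_1,V_2$ from the Biot-Savart formula $v=\nabla^\perp \Delta^{-1}\omega$. Under the same substitutions the inverse Laplacian acquires the operator symbol $(-k^2 + (U'(\p_z - ikt))^2)^{-1}$, and applying the two components of $\nabla^\perp$, namely $-\p_y$ and $\p_x$, produces respectively $-U'(\p_z-t\p_x)$ and $\p_x$ in the numerator; these are precisely the stated formulas for $V_1$ and $V_2$. Assembling the pieces gives \eqref{Equation:W} and its Fourier form, and invertibility of both substitutions provides the converse direction. The computation is entirely routine; the main point to be careful about is that $(\p_y - iktU'(y))^2$ must be interpreted as an operator composition rather than a pointwise square, so that the chain-rule terms involving $U''$ are kept in place and are neither duplicated nor lost during the conjugation.
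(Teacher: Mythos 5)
Your proposal is correct and follows essentially the same route as the paper: the paper's proof likewise takes the conjugated Fourier formulation \eqref{tildeW} as given (it is derived in the text immediately before the lemma) and then applies the substitution $\p_y - tU'\p_x = U'(\p_z - t\p_x)$ coming from the change of variables $z=U(y)$, whose existence follows from the strict monotonicity of $U$ implied by $\mu\,\p_y U=\mathrm{const}\neq 0$. Your additional term-by-term identification of $V_1, V_2$ and the remark that $(\p_y - iktU')^2$ is an operator composition are consistent with, and slightly more explicit than, the paper's argument.
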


\begin{proof}[Proof of Lemma \ref{lem:z}]
  Since by assumption
  \begin{align*}
    \mu \p_y U
  \end{align*}
  equals a non-trivial constant and $\mu>0$ does not vanish, it follows that $U$
  is strictly monotone and hence invertible.
  Thus, the above claimed change of variables exists.
  Furthermore, it holds that
  \begin{align*}
    \p_y -t U'\p_x= U' \p_z- t U'\p_x = U' (\p_z-t\p_x),
  \end{align*}
  which together with \eqref{tildeW} concludes the proof.
\end{proof}
In the following sections we establish asymptotic stability of $W$ in Sobolev
regularity.
More precisely, we will first consider the special case where $U$ is globally
bilipschitz with comparable upper and lower Lipschitz constants in Section
\ref{sec:model2}.
Building on these results, in Section \ref{sec:glue} we consider the general
case, where we further introduce modified changes of coordinates adapted to the
local behavior of the coefficient functions. Finally, in Section
\ref{sec:proofmain} we bootstrap the stability results in $L^2$ to establish
stability in $H^N$.

Unless noted otherwise we here always work in coordinates with respect to $z$ and
may without loss of generality assume that $W$ is localized at frequency $k\neq 0$,
arbitrary but fixed, with
respect to $x$.
We thus briefly write
\begin{align*}
  L^2 := L^2(\R, dz)
\end{align*}
and use
\begin{align*}
  \langle \cdot,\cdot \rangle
\end{align*}
to refer to the inner product on that space.

\section{A Model Case and $L^2$ Estimates}
\label{sec:model2}
In this section we consider a special case of the linearized Navier-Stokes
equations \eqref{eq:1} in vorticity formulation
\begin{align*}
  \dt \omega + U(y) \p_x \omega - U'' v_2 &= \dv(\mu \nabla \omega) - \dv(\mu \nabla v_1) - \mu'' \p_x v_2,\\
  v&= \nabla^{\bot}\Delta^{-1}\omega,
\end{align*}
or rather
\begin{align*}
   \d_t W-U''   V_2
  &=\dv_t (\mu \nabla_t  W) 
  -\dv_t(\mu' \nabla_t   V_1)
  -\mu'' \d_x V_2.\\
  W(t,x,U(y))&=\omega(t,x-tU(y),y),
\end{align*}
in which we additionally assume that $\mu$ is comparable to a constant globally instead of just locally.
More precisely, in this section we additionally require that
\begin{align*}
  \frac{\sup(\mu)}{\inf(\mu)} \leq 100.
\end{align*} 
We note that this further implies that $U$ is bilipschitz and hence allows for a
global change of variables to $z=U(y)$ (see Section \ref{sec:notation} and Lemma
\ref{lem:z}).
This simplification therefore allows us to more clearly present commutator estimates and
introduce techniques of proof.

In a second step in Section \ref{sec:glue} we use that such bounds are true
\emph{locally}, that is when $\mu$ and $U$ are restricted to bounded intervals
of a given length.
Extending these restrictions to functions on the whole space which are bounded
above and below we will hence be able to use this section's results for the
``localized'' problems. A key challenge then lies in controlling non-linear
interaction due to the Biot-Savart law and in ``gluing'' the various estimates
in a way that preserves dissipation and decay estimates.

The following proposition summarizes our main results for this section and
employs a by now common Lyapunov functional/energy approach (see for instance
\cite{masmoudi2020stability,bedrossian2016enhanced,tao20192d,liss2020sobolev}), where a key challenge lies in constructing a suitable time, frequency and
space-dependent operator $A$ which captures possible growth in the evolution of
solutions to \eqref{eq:1}.

\begin{prop}
\label{prop:model}
  Let $\mu, U$ satisfy the assumptions of Theorem \ref{thm:main} and
  additionally suppose that
  \begin{align*}
    \frac{\sup(\mu)}{\inf(\mu)} \leq 100.
  \end{align*}
  Then there exists a
  time-dependent family of operators $A(t)$ such that for any initial data
  $\omega_0\in L^2$ it holds that
  \begin{align*}
  c \|W(t)\|_{L^2} \leq \|A(t)W(t)\|_{L^2} &\leq \|W(t)\|_{L^2}.
  \end{align*}
  Furthermore, define $u=\inf U'$ and $\nu:=\inf \mu (U')^2$ (note that $\nu=u
  \mu(0)U'(0)$, since $\mu U'$ is constant) then it holds that 
  \begin{align*}
    \frac{d}{dt} \|AW \|_{L^2}^2 \leq -0.001 \|\mathcal{F}(\nu^{1/3}+ \nu (\xi-kt)^2 + \frac{1}{1+u^2(\xi-kt)^2})\mathcal{F}^{-1}AW\|_{L^2}^2 \leq 0 
  \end{align*}
  In particular, the linear stability estimates in $L^2$ of Theorem
  \ref{thm:main} hold in this setting.
\end{prop}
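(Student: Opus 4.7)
The plan is a time-dependent Fourier-multiplier Lyapunov functional, in the now standard spirit of enhanced dissipation schemes (cf.\ \cite{bedrossian2016enhanced,masmoudi2020stability}). I work in the coordinates of Lemma \ref{lem:z}, fix a nonzero $x$-frequency $k$, and take the Fourier transform in $z$ with dual variable $\xi$; the shifted frequency $\tau:=\xi-kt$ is natural since it is constant along the transport characteristics. I would define $A(t)$ as a self-adjoint scalar Fourier multiplier $m(t,\tau)$ in $z$, chosen to take values in a fixed interval bounded above by $1$ and below by a positive constant $c$ depending only on the ratio $\sup(\mu)/\inf(\mu)$. This yields the bilateral bound $c\|W\|\le\|AW\|\le\|W\|$ automatically. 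The role of $m$ is that, upon differentiation in $t$, it produces an additional \emph{ghost} dissipation of size $\nu^{1/3}$ in precisely the resonant window where the natural viscous dissipation $\nu\tau^2$ is insufficient.

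Concretely, I would take $m\equiv 1$ for $|\tau|\ge \nu^{-1/3}$ (where $\nu\tau^2$ already dominates $\nu^{1/3}$), and let $m$ decrease monotonically toward a fixed positive constant as $|\tau|\to 0$, arranged so that $-\dot m/m\sim \nu^{1/3}$ on the critical window $|\tau|\lesssim \nu^{-1/3}$. Since each fixed Fourier mode traverses this window over a time of order $\nu^{-1/3}$, $m$ undergoes only $O(1)$ total logarithmic variation, which justifies that $m$ can be kept uniformly bounded above and below.

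Differentiating $\|AW\|_{L^2}^2$ and substituting \eqref{Equation:W} then yields
\begin{equation*}
\tfrac12\tfrac{d}{dt}\|AW\|^2 = \langle (\dot A)A W, W\rangle + \langle A^2 W,\ U''V_2 -\mu''\partial_x V_2 + \dv_t(\mu\nabla_t W) - \dv_t(\mu'\nabla_t V_1)\rangle.
\end{equation*}
The $\dot A$ term contributes exactly the ghost piece $-c\,\|\mathcal{F}(\nu^{1/6})\mathcal{F}^{-1}AW\|^2$. Moving $A$ through the divergence in $\dv_t(\mu\nabla_t W)$ produces, modulo a controllable commutator, the natural viscous dissipation $-\|\sqrt{\mu}\,\nabla_t AW\|^2$, which in Fourier equals $\nu(k^2+(U')^2\tau^2)|\widehat{AW}|^2$ up to lower-order terms. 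The Biot-Savart factors satisfy $|\widehat{V_j}|\lesssim (k^2+(U')^2\tau^2)^{-1/2}|\widehat W|$, which together with the prefactors supplies the $(1+u^2\tau^2)^{-1}$ contribution. Plancherel then assembles these into the symbol $\nu^{1/3}+\nu(\xi-kt)^2+(1+u^2(\xi-kt)^2)^{-1}$ claimed in the proposition.

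The main obstacle is the stretching term $U''V_2$, which is absent in Couette flow but nonzero here. Differentiating the balance relation $\mu U'\equiv\text{const}$ gives $U''=-U'\,\mu'/\mu$, so by the first part of \eqref{eq:gradual} one has $|U''|\le 0.001\,U'$ pointwise. The extra factor $U'$ combines with the structure of $\nabla_t$ and the Biot-Savart bound to control $|\langle A^2W, U''V_2\rangle|$ by at most $0.001$ times the $(1+u^2\tau^2)^{-1}$ good term. A parallel argument, using in addition the $L^1$-Fourier decay bound on $\mu'$ from \eqref{eq:gradual}, handles $\mu''\partial_x V_2$ and $\dv_t(\mu'\nabla_t V_1)$, as well as the commutators between $A$ and multiplication by the $z$-dependent coefficients $\mu,\mu',U'$. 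After collecting all universal constants, each error is absorbed into a small fixed fraction of the good dissipative terms, and the stated bound $\tfrac{d}{dt}\|AW\|^2\le -0.001\|\cdots\|^2$ follows.
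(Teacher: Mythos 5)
Your overall strategy (a time-dependent Fourier-multiplier ghost energy) is the same family as the paper's, but there is a genuine gap in how you treat the stretching term $U''V_2$, and it is precisely the point where this problem differs from Couette. In your construction the multiplier $m$ only decays at rate $\nu^{1/3}$ on the resonant window, so the only negative contributions to $\tfrac{d}{dt}\|AW\|^2$ are the ghost term of size $\nu^{1/3}|\widehat{AW}|^2$ and the viscous term of size $\nu(k^2+(\xi-kt)^2)|\widehat{AW}|^2$; the Biot--Savart bound $|\widehat{V_j}|\lesssim (k^2+u^2(\xi-kt)^2)^{-1/2}|\widehat W|$ is an upper bound on error terms and cannot ``supply'' the good term $(1+u^2(\xi-kt)^2)^{-1}$ appearing in your (and the proposition's) final symbol. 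This matters because near resonance, $|\xi-kt|\lesssim 1$, your estimate for the stretching term is of size $0.001\,\frac{u|k|}{k^2+u^2(\xi-kt)^2}|\widehat{AW}|^2$, which is of order $0.001$ pointwise, whereas the good terms available to absorb it are only of order $\nu^{1/3}+\nu$ there --- arbitrarily smaller, since $\nu$ may be tiny and the factor $0.001$ from \eqref{eq:gradual} does not scale with $\nu$. The paper resolves exactly this by building the inviscid-type weight $\frac{u}{1+u^2(\xi/k-t)^2}$ into $\dot m/m$ on the bad set (Definition \ref{defi:multiplier}, Lemma \ref{lem:velocityerrors}, following \cite{Zill3}): the arctan integral of this weight is uniformly bounded, so $m$ stays comparable to $1$, and its decay is what generates the third term of the symbol and absorbs $U''V_2$ uniformly as $\nu\to 0$. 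Without this second ingredient the Lyapunov argument fails in the small-viscosity regime.

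A secondary, fixable flaw: you take $m$ to be a function of $|\tau|=|\xi-kt|$ alone, equal to $1$ outside the window and dipping down inside it. Then after a mode crosses the resonance, $|\tau|$ increases again and $m$ must climb back to $1$, so $\langle \dot A\,AW, W\rangle$ becomes positive there and produces growth rather than decay. The multiplier must be monotone in $t$ for each fixed $(k,\xi)$ --- equal to $1$ before the resonant interval and to a constant $c$ after it --- as in Definition \ref{defi:multiplier}.
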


\begin{itemize}
\item The operator $A(t)$ is defined in terms of a Fourier multiplier in
  Definition \ref{defi:multiplier}.   
\item 
We remark that in the present setting by assumption $\mu$ and hence $U'$ may
only vary by a factor $100$. Therefore $u$ is also comparable to an average
value of $U'$.
\item 
The decay by $\nu^{1/3}+ \nu (\xi-kt)^2$ quantifies the enhanced dissipation
mechanism. More precisely, if $|\xi-kt|\geq \nu^{-1/3}$ the latter term
dominates, but for frequencies with $|\xi-kt|$ smaller than this the enhanced
rate $\nu^{1/3}$ still persists (see Definition \ref{defi:multiplier} and Lemma \ref{lem:dissipationterm} for further
discussion).
\item 
The last multiplier $\frac{u}{1+u^2(\xi-kt)^2}$ corresponds to control of the
velocity perturbation. Since $u$ is bounded below this contribution is dominated
by $\nu^{1/3}$ for large frequencies. The statement of the theorem thus is that
even for frequencies where $|\xi-kt|$ is small this control persists. In
particular, this control remains valid as $\nu$ tends to zero, which is crucial
for the control of $U'' v_2$ in the evolution equation (see Lemma
\ref{lem:velocityerrors}).
\item We note that here the choice of $G$ involves a small factor $0.1$, as do
  the dissipation rates in Theorem \ref{thm:main}. These small factors allow use
  some flexibility in the control of interaction terms (see Section \ref{sec:lemmas}).
\end{itemize}

In the interest of clear presentation we formulate the main steps of the proof
as a series of lemmas. We then show how to use these to establish Proposition
\ref{prop:model} before proving the lemmas at the end of this section.

 \begin{defi}[Decreasing Multiplier and Fourier sets]
   \label{defi:multiplier}
   Let $\mu, U$ be a given stationary solution as in Theorem \ref{thm:main} and
   define the \emph{local dissipation rate} $\nu$ as
   \begin{align*}
     \nu: = \inf \mu (\p_y U)^2.
   \end{align*}
   and the local shear rate as
   \begin{align*}
     u:=\inf |U'|.
   \end{align*}
   Let further
   \begin{align*}
     G:= 0.1 \nu^{-1/3}.
   \end{align*}  
   We then define the \emph{good set} $G_t \subset \Z \times \R$ by
   \begin{align*}
     G_t:= \{(k,\xi): k \neq 0, |\frac{\xi}{k}-t|\geq G\},
   \end{align*}
   and the \emph{bad} set $B_t$ as the complement (excluding $k=0$)
   \begin{align*}
     B_t=\{(k,\xi) \in \Z \times \R: k \neq 0, |\frac{\xi}{k}-t|< G\}.
   \end{align*}
   For any fixed $k$, if $B_t \cap \{k\}\times \R$ is non-empty, the set $G_t
   \cap \{k\}\times \R$ has two connected components, where we denote by
   $G_t^{-}$ the half-line extending to $-\infty$ and by $G_t^{+}$ the half-line
   extending to  $+\infty$.

   Associated with this partition we define a Fourier multiplier $m$ by
   \begin{align*}
     \dt m(t,k,\xi) =
     \begin{cases}
       m(t,k,\xi)(-\nu^{1/3}- \frac{u}{1+u^2(\frac{\xi}{k}-t)^2}), & \text{ if } (k,\xi) \in B_t, \\
       0, & \text{else.}
     \end{cases}
   \end{align*}
   and the asymptotic condition $\lim_{t\rightarrow -\infty}m(t,k,\xi)=1$.

   We denote the operator associated with the Fourier multiplier $m$ by $A(t)$:
   \begin{align*}
    A \phi = \mathcal{F}^{-1}m \mathcal{F}\phi,
   \end{align*}
   where $\mathcal{F}$ denotes the Fourier transform with respect to $x$ and $z=U(y)$.
 \end{defi}

This multiplier combines features of the inviscid multiplier of \cite{Zill3} and
the constant viscosity multiplier of \cite{liss2020sobolev,bedrossian2016sobolev}. 
\begin{itemize}
\item The relative decay of $\mu$ by $-\nu^{1/3}$ compensates for the relatively
  weak dissipation in the \emph{bad} Fourier region. Here the decay of $A$ allows
  to establish damping of $AW$.
\item The second multiplier models the growth of $v_2$ as given by the
  Biot-Savart law or rather of $U' v_2$. As we discuss in the proof of Lemma
  \ref{lem:velocityerrors} we then use that by assumption $U'' =
  \frac{U''}{U'}U'$ is small compared to $U'$ and hence the linearization error
  $U'' v_2$ can be controlled by this multiplier when we are in the \emph{bad} region.  
\end{itemize}

As we prove in the following subsection the multiplier $m$ (and hence the
operator $A$) satisfies several useful bounds and, in particular, serves to
control various error terms when $W$ is concentrated in the \emph{bad} set. 
 \begin{lem}
\label{lem:propertiesofm}
   Let $m$ be as in Definition \ref{defi:multiplier}. Then $m$ satisfies the
   following estimates:
   \begin{enumerate}
   \item There exists a constant $c$ independent of $\xi$ and $t$ such that
     \begin{align*}
       c \leq m \leq 1.
     \end{align*}
   \item The multiplier $m$ is constant (independent of $\xi$ and $t$, but might
     depend on $k$) for large positive or negative times. By the conventions of
     our definition one of these constants is chosen as $1$ and the other as $c$:
     \begin{align*}
       m(t,k,\xi)&=c \text{ if } t>\frac{\xi}{k}+G,\\
       m(t,k,\xi)&=1 \text{ if } t<\frac{\xi}{k}-G.
     \end{align*}
   \item The operator $A$ is a continuous invertible operator from $L^2$ to
     $L^2$ and satisfies
     \begin{align*}
       c \|\phi\|_{L^2} \leq \|A\phi\|_{L^2} \leq \|\phi\|_{L^2}
     \end{align*}
     for all $\phi \in L^2(dz)$.
   \end{enumerate}
 \end{lem}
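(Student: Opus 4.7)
The plan is to analyze the ODE defining $m$ pointwise in $(k,\xi) \in \Z\times\R$ and then transfer the resulting pointwise bounds to operator bounds on $L^2$ by Plancherel. Fix $k \neq 0$ and $\xi \in \R$, and note that $t \mapsto m(t,k,\xi)$ is constant (equal to $1$ by the asymptotic condition and the fact that $\dt m = 0$ on the good set) for all $t \leq \xi/k - G$, then satisfies a linear ODE $\dt m = -\alpha(t) m$ on $I_{k,\xi} := (\xi/k - G,\, \xi/k + G)$ with nonnegative coefficient
\begin{align*}
\alpha(t) = \nu^{1/3} + \frac{u}{1+u^2(\tfrac{\xi}{k}-t)^2},
\end{align*}
and finally is again constant for $t \geq \xi/k + G$. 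This immediately yields claim (2), and shows that $m$ is monotonically non-increasing in $t$ with $m(-\infty,k,\xi) = 1$, so $m \leq 1$.

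For the lower bound in (1), I would explicitly compute the cumulative decrease by integrating $\alpha$ across $I_{k,\xi}$. Substituting $s = \xi/k - t$,
\begin{align*}
\int_{\xi/k - G}^{\xi/k + G} \alpha(t)\,dt = 2G\,\nu^{1/3} + \int_{-G}^{G} \frac{u}{1+u^2 s^2}\,ds = 2G\nu^{1/3} + 2\arctan(uG).
\end{align*}
The choice $G = 0.1\,\nu^{-1/3}$ in Definition \ref{defi:multiplier} is precisely calibrated so that the first term equals the universal constant $0.2$, while the second is bounded by $\pi$ uniformly in the shear rate $u$. Hence $m(t,k,\xi) \geq e^{-0.2-\pi} =: c > 0$ uniformly in all parameters, which gives (1).

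For (3), since $m$ is a bounded measurable function on $\Z \times \R$ with $c \leq m \leq 1$, the Fourier multiplier operator $A = \mathcal{F}^{-1} m \mathcal{F}$ is bounded on $L^2$ by Plancherel with operator norm at most $1$, and the pointwise lower bound $m \geq c$ gives $\|A\phi\|_{L^2} \geq c\|\phi\|_{L^2}$. In particular $A$ has trivial kernel and dense range, and its inverse is the Fourier multiplier with symbol $1/m$, which is bounded above by $1/c$, yielding continuous invertibility.

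The proof is essentially bookkeeping; the main point (and the only place where a nontrivial estimate enters) is step (1), where the specific choice $G = 0.1\,\nu^{-1/3}$ ensures that the enhanced-dissipation rate $\nu^{1/3}$ integrated over the bad time window produces an $O(1)$ constant rather than something blowing up or vanishing as $\nu \to 0$, and simultaneously the velocity-control multiplier contributes a $u$-independent $\arctan$ integral. This uniformity is exactly what makes $m$ a legitimate Fourier multiplier comparable to the identity, which is needed for the energy-method estimates in the proof of Proposition \ref{prop:model}.
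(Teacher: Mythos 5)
Your proposal is correct and follows essentially the same route as the paper: explicitly integrating the defining ODE over the bad time window of length $2G$, noting $2G\nu^{1/3}=0.2$ and the $\arctan$ integral bounded by $\pi$ to get the uniform lower bound, and transferring the pointwise bounds $c\leq m\leq 1$ to $A$ via Plancherel. The only cosmetic difference is that you define $c$ as the lower bound $e^{-0.2-\pi}$ while the paper takes $c$ to be the exact limiting value $\exp(-0.2-2\arctan(uG))$, which does not affect any of the claims.
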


 Given this definition of our multiplier our main task in the following is to establish suitable
 estimates for
 \begin{align*}
   \frac{d}{dt}\|AW\|_{L^2}^2/2 &= \langle AW, \dot{A} W \rangle + \langle AW, A \dt W \rangle\\
   &=  \langle AW, \dot{A} W \rangle + \langle A U'' V_2, A W  \rangle\\
   & \quad + \langle AW, A\dv_t(\mu \nabla_t)W \rangle\\
   &\quad - \langle AW, A\dv_t(\mu' \nabla_t)V_1   \rangle \\
   &\quad - \langle AW, A \mu'' \p_x V_2 \rangle,
 \end{align*}
 where we used the equation \eqref{Equation:W} to rewrite $A\dt W$.
More precisely, we intend to show that the dissipation and the decay of $m(t)$
are strong enough to absorb possible growth and that hence $\|AW\|^2$ is
decreasing in time.
Integrating these estimates we thus obtain a Lyapunov functions, which allows us to prove Proposition
 \ref{prop:model}.

The following lemma quantifies the combined strength of the dissipation
mechanism and the decay of the multiplier. 
 \begin{lem}[The dissipation term]
   \label{lem:dissipationterm}
Let $t\geq 0$, let $A, G$ and $m$ be given by Definition~\ref{defi:multiplier} and
let $W \in L^2$ be a given function.
Then it holds that
\begin{align*}
  & \quad 0.2 \langle AW, \dot{A} W \rangle + \langle AW, A\dv_t(\mu \nabla_t)W \rangle 
  \\
  &\leq -0.1 \Bigl\|\mathcal{F}^{-1} \Bigl( \nu^{1/3} + u^2 (\xi-kt)^2 + \frac{u}{1+u^2(\frac{\xi}{k}-t)^2}\Bigr) \mathcal{F} AW\Bigr\|^2,
\end{align*}
where $u$ and $\nu$ are defined as in Definition~\ref{defi:multiplier}.
\end{lem}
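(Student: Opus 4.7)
The plan is to pass to Fourier in both $x$ (the mode $k \neq 0$ being fixed) and $z$, and to analyze each of the two terms on the left-hand side on the bad set $B_t$ and the good set $G_t$ separately. For the first term, since $A = \mathcal{F}^{-1} m \mathcal{F}$ with $m$ given piecewise in Definition~\ref{defi:multiplier}, Plancherel yields
\begin{align*}
\langle AW, \dot A W\rangle = \int m\, \dt m\, |\hat W|^2\, d\xi,
\end{align*}
and by construction $\dt m \equiv 0$ on $G_t$, while on $B_t$ one has $\dt m = -m\bigl(\nu^{1/3} + u/(1+u^2(\xi/k - t)^2)\bigr)$. Multiplying by $0.2$ therefore directly produces the desired $\nu^{1/3}$ and Biot-Savart multipliers on $B_t$, with a slack of $0.2 - 0.1 = 0.1$ available for absorbing cross terms and errors arising from the dissipation piece.

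The second term I would treat via integration by parts in the $L^2(dz)$ inner product, using that $\nabla_t$ and $\dv_t$ (defined in \eqref{nablat}) are formal adjoints up to a sign. This produces the principal dissipation $-\langle \nabla_t AW, \mu \nabla_t AW\rangle$ together with commutator corrections arising from sliding $A$ past the multiplication operators $\mu$ and $U'$. In this model-section setting $\sup\mu/\inf\mu \leq 100$, and, crucially, $\|\mu'/\mu\|_{L^\infty}, \|U'(\mu'/\mu)'\|_{L^\infty} < 0.001$ by \eqref{eq:gradual}, which together with the localization of $\p_\xi m$ near $\p B_t$ lets me bound the commutators by a small multiple of the leading dissipation. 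The principal part, rewritten on the Fourier side, is $-\int \mu\bigl(k^2 + (U')^2(\xi - kt)^2\bigr)|m\hat W|^2\,d\xi$; using $\mu(U')^2 \geq \nu$ and $(U')^2 \geq u^2$ this dominates $\int u^2(\xi-kt)^2|m\hat W|^2$ appearing in the statement.

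To conclude I would combine the two regions. On $B_t$ the $0.2\,\dot A$ contribution supplies $\nu^{1/3}$ and the Biot-Savart multiplier, while the dissipation additionally furnishes $u^2(\xi-kt)^2$. On $G_t$, where $\dot A$ gives nothing, I would rely entirely on the dissipation: the defining inequality $|\xi/k - t| \geq G = 0.1\,\nu^{-1/3}$, together with $|k|\geq 1$, gives $\nu(\xi-kt)^2 \geq 0.01\,\nu^{1/3}$, which dominates $\nu^{1/3}$ there; and the Biot-Savart multiplier is of size $u/(1+u^2G^2) \lesssim \nu^{2/3}/u$, which is controlled by the dissipation using \eqref{eq:hydrobalance} and the smallness assumption $\sup(\mu)^2/\inf(\mu) < 0.1$. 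Summing the two regional contributions then yields the claimed bound.

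The main obstacle throughout is the commutator $[A, \mu(U')^2]$-type terms: they are nonlocal (since $A$ is a Fourier multiplier in $z$ while $\mu, U'$ are multiplications by $z$-dependent functions) and are not \emph{a priori} controlled by the dissipation; they are tamed by the explicit smallness of the relative derivatives of $\mu$ in \eqref{eq:gradual}, together with the fact that $m$ depends on $\xi$ only within a thin slab around $B_t$. A secondary subtlety is matching the two regimes across $\p B_t$, where $\dt m$ has a jump but $m$ itself stays continuous, and the $0.1$ slack produced in paragraph one is what accommodates this boundary mismatch.
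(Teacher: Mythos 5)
Your treatment of the two key difficulties does not match what is actually needed, and the central step is left open. The paper's proof never estimates a global commutator $[A,\mu]$ at all: it decomposes $W=W_1+W_2+W_3$ according to the Fourier regions $G_t^{-}$, $B_t$, $G_t^{+}$ of Definition \ref{defi:multiplier}, on the first and last of which $m$ is \emph{constant} (values $1$ and $c$), so the diagonal good--good terms carry no commutator; the $G_t^{-}$--$G_t^{+}$ cross terms couple only through frequencies of $\mu$ (and $U'$) of size at least $2G$, and are made small by the \emph{second} condition in \eqref{eq:gradual} (the $L^1$ smallness of $\mathcal{F}(\mu')$ at high frequencies, cf.\ \eqref{Condition2}); and every term involving the slab piece $W_2$ is absorbed by $\langle AW_2,\dot A W_2\rangle$, using that on $B_t$ one has $|\xi-kt|\lesssim G|k|$ so all such contributions are of size $O(\nu^{1/3})\|AW_2\|^2$. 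Your proposal instead integrates by parts globally and asserts that the commutators obtained by sliding $A$ past $\mu$ and $U'$ are controlled by $\|\mu'/\mu\|_{L^\infty}$, $\|U'\p_y(\mu'/\mu)\|_{L^\infty}<0.001$ together with the localization of $\p_\xi m$. This does not close: the kernel of $[A,\mu]$ is $(m(\xi)-m(\eta))\,\mathcal{F}(\mu)(\xi-\eta)$, and for $\xi,\eta$ on opposite sides of the slab the factor $m(\xi)-m(\eta)=1-c$ is of order one, so smallness there requires decay of $\mathcal{F}(\mu')$ at frequencies $\gtrsim G$ --- exactly the Fourier-side hypothesis of \eqref{eq:gradual}, which you never invoke and which cannot be deduced from the pointwise bounds (these permit $\mu$ to vary by a factor $100$ with frequency content spread up to scale $G$). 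Moreover, near the slab the available dissipation is only $\nu G^2=0.01\,\nu^{1/3}$, so such terms cannot be absorbed ``into the leading dissipation'' as you claim; they must be absorbed by the $\dot A$ term, which is precisely why the paper organizes the whole proof around the decomposition rather than around a commutator bound.

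A secondary flaw: your assertion that the principal dissipation dominates $\int u^2(\xi-kt)^2|m\hat W|^2$ ``using $\mu(U')^2\geq\nu$ and $(U')^2\geq u^2$'' is a non sequitur. Since $\mu U'=\sigma$ is constant, $\nu=\sigma u$, and with $\mu$ small one has $\nu\ll u^2$; the dissipation only furnishes $\nu\,(\xi-kt)^2$ (this is what the paper's proof actually produces, and what enters Proposition \ref{prop:model}; the $u^2$ in the lemma's display is inconsistent with that statement). Relatedly, when integrating $U'(\p_z-t\p_x)$ by parts you must account for the extra $(\p_z U')$ term; the paper controls it on the good pieces by combining $\|\p_zU'/U'\|_{L^\infty}$ with the gain $G^{-1}$ from inverting $(\p_z-t\p_x)$ on the Fourier support of $W_1,W_3$ (condition \eqref{Condition1}), which again uses the frequency decomposition your argument dispenses with.
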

This decay lies at the core of our damping mechanism.
In the following lemmas we show that all other contributions to
$\frac{d}{dt}\|AW\|^2$ can be considered as errors.

We begin by the errors in the dissipation term due to the fact $\mu$ is
non-constant.
Here we recall that by the assumptions imposed in Theorem \ref{thm:main} the
relative change of $\mu$ is required to be small:
\begin{align*}
 |\frac{\mu'}{\mu}| \leq 0.001
\end{align*}
as well as
\begin{align*}
  |\p_z \frac{\mu'}{\mu}| \leq 0.001. 
\end{align*}
This smallness together with the fact that commutator terms involve
integro-differential operators of lower order than $2$ allows us to control
these errors.
 \begin{lem}[Viscosity errors]
\label{lem:viscosityerrors}
Let $t\geq 0$, let $A, C, \nu, u$ and $m$ be given by Definition~\ref{defi:multiplier} and
let $W \in L^2$ be a given function.
Then it holds that
\begin{align*}
  & \quad 0.2 \langle AW, \dot{A} W \rangle - \langle AW, A(\dv_t(\mu' \nabla_t)v_1+ \mu'' \p_x v_2)  \rangle \\
  &\leq 0.01 \|\mathcal{F}^{-1}\left(   \nu^{1/3} + u^2 (\xi-kt)^2 + \frac{u}{1+u^2(\frac{\xi}{k}-t)^2}\right) \mathcal{F} AW\|^2.
\end{align*}
\end{lem}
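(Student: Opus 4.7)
The plan is to reduce each viscosity error pairing to an absorbable multiple of the dissipation quantity on the right hand side by combining the smallness assumption \eqref{eq:gradual} with the Biot--Savart identity and the structure of the Fourier multiplier $A$.

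First I would perform an algebraic reduction. Writing $\dv_t(\mu'\nabla_t V_1) = \mu'\Delta_t V_1 + \nabla_t \mu'\cdot \nabla_t V_1$ and using the Biot--Savart identity $\Delta_t V = -\nabla_t^{\bot}W$ (in components $\Delta_t V_1 = -U'(\p_z - t\p_x)W$), the leading piece becomes $-\mu' U'(\p_z - t\p_x)W$. The hydrostatic balance $\mu U' = \sigma$ implies $\mu'U' = -\mu U''$ and $|\mu'/\mu| = |U''/U'|$, so the coefficient is $\mu U'$ times the small quantity $|\mu'/\mu|\le 0.001$. The lower order term involves $\mu''$, which I would split as $\mu'' = (\mu')^2/\mu + \mu\p_y(\mu'/\mu)$: the first summand is of size $\mu\cdot(0.001)^2$, and the second is controlled by the second part of \eqref{eq:gradual}. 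The same reductions apply to $\mu''\p_x V_2$, using that the Fourier symbol of $V_2$ is bounded by a constant multiple of $u/(1 + u^2(\xi/k - t)^2)$.

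Next I would commute $A$ past the coefficients $\mu'$ and $\mu''$, producing commutator errors $[A,\mu']$ and $[A,\mu'']$. Since the symbol $m$ is constant outside the slab of width $2G$ around $\xi/k = t$ and varies smoothly inside, $\|\p_\xi m(t,k,\cdot)\|_{L^1(d\xi)}$ is bounded uniformly in $t,k$. A Schur-type estimate in $z$, together with $\|\mu'/\mu\|_{W^{1,\infty}}\le 0.001$, then bounds the commutators by $0.001\sup(\mu)\|AW\|_{L^2}^2$. After these reductions each error pairing has the schematic form $0.001\langle AW, \mu U'(\p_z - t\p_x)AW\rangle$ or $0.001\langle AW, \mu A\p_x V_2\rangle$. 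I would then split each pairing according to the Fourier decomposition $\Z\times\R = G_t \cup B_t$ of Definition \ref{defi:multiplier}. On the good set, Cauchy--Schwarz with the smallness factor $0.001$ absorbs the pairing into the $u^2(\xi - kt)^2$ term supplied by Lemma \ref{lem:dissipationterm}. On the bad set, the bound $|\xi/k - t|\le G = 0.1\nu^{-1/3}$ combined with $\mu(U')^2 \lesssim \nu$ (a consequence of $\sup(\mu)/\inf(\mu)\le 100$) gives a bound by $0.01\nu^{1/3}$, which is absorbed by the $\nu^{1/3}$ decay generated by $\dot A$. The $V_2$ contributions are analogous: on $B_t$ the Biot--Savart symbol is bounded by a constant multiple of $u/(1 + u^2(\xi/k - t)^2)$, which is exactly the velocity multiplier appearing on the right hand side.

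The main obstacle is the commutator estimate: because $m$ is built from piecewise formulae its $\xi$-derivative is not obviously small relative to $m$ itself, so the Schur bound must combine carefully with the $W^{1,\infty}$ smallness of $\mu'/\mu$ to produce a constant small enough to fit on the right hand side of the claimed inequality. Once this is done, the Fourier splitting is routine.
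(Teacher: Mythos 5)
Your proposal reaches the same conclusion but organizes the main term differently from the paper, and the difference is worth noting. The paper never localizes the error: it keeps $\mu'\nabla_t V_1$ and $\mu''\p_x V_2$ in velocity form, replaces the variable-coefficient stream function $\phi$ by the constant-coefficient comparison $\psi$ solving $(\p_x^2+u^2(\p_z-t\p_x)^2)\psi=W$ (the energy-comparison argument borrowed from \cite{coti2019degenerate}), and then absorbs the good-region part into the dissipation (since $\frac{1}{1+u^2(\xi/k-t)^2}\le\frac{1}{1+G^2}\ll\nu^{1/3}$ there) and the bad-region part into the decay of $A$, whose velocity multiplier $\frac{u}{1+u^2(\xi/k-t)^2}$ was designed exactly for this. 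You instead use the exact transformed Biot--Savart identity $(\p_x^2+(U'(\p_z-t\p_x))^2)V_1=-U'(\p_z-t\p_x)W$ together with the balance $\mu'U'=-\mu U''$ to turn the leading piece of $\dv_t(\mu'\nabla_t V_1)$ into the \emph{local} term $\frac{\mu'}{\mu}\,\mu U'(\p_z-t\p_x)W$, which is then absorbed directly by the dissipation; only the $\mu''$-weighted remainders are treated as velocity terms, and there your bad-set argument coincides with the paper's mechanism. This localization is a genuine simplification for the main term (no stream-function comparison needed), at the cost of having to track the Leibniz remainders carefully; note also that the $\mu\p_y(\mu'/\mu)$ piece of your $\mu''$ splitting is controlled by the first, pointwise part of \eqref{eq:gradual}, not by the Fourier condition.

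The one step that does not close as written is the commutator $[A,\mu']$. The kernel of $[A,\mu']$ is $\check m(z-z')(\mu'(z')-\mu'(z))$, and since $m(t,k,\cdot)$ is a smoothed step (constant equal to $1$ on one side of the critical slab and to $c$ on the other), neither $\check m$ nor the inverse transform of $\p_\xi m$ is integrable; the bound $|\mu'(z')-\mu'(z)|\le\|\mu''\|_\infty|z-z'|$ then leaves a kernel of size roughly $|z-z'|\,|\check m(z-z')|$, for which the Schur test you invoke does not give a finite constant without further decomposition (e.g.\ paraproduct/frequency splitting or exploiting the $L^1$ Fourier condition in \eqref{eq:gradual} away from low frequencies). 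The simplest repair is to avoid commuting altogether, as the paper does: estimate $\langle AW, A X\rangle\le\|AW\|\,\|X\|$ using $\|A\|_{L^2\to L^2}\le 1$, bound $\|X\|$ for $X=\mu'\nabla_t V_1,\ \mu''\p_x V_2$ by dissipation- and velocity-type norms of $W$, and convert back to $AW$ via $\|W\|\le c^{-1}\|AW\|$ from Lemma \ref{lem:propertiesofm}; this costs only the fixed factor $c^{-1}$, which the smallness of $\mu'/\mu$ accommodates. With that modification (or with a genuinely worked-out commutator bound) your argument goes through.
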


\begin{lem}[Velocity errors]
  \label{lem:velocityerrors}
   Let $t\geq 0$, let $A, C, \nu, u$ and $m$ be given by Definition~\ref{defi:multiplier} and
let $W \in L^2$ be a given function.
Then it holds that
\begin{align*}
 &\quad  0.2 \langle AW, \dot{A} W \rangle - \langle AW, A(U'' v_2)  \rangle \\
  &\leq  0.01 \|\mathcal{F}^{-1}\left( \nu^{1/3} + u^2 (\xi-kt)^2 + \frac{u}{1+u^2(\frac{\xi}{k}-t)^2}\right) \mathcal{F} AW\|^2.
\end{align*}
\end{lem}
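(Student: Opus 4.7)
The plan is to exploit the hydrostatic balance to reduce the velocity error $U''V_2$ to a small multiple of $U'V_2$, and then to match the Biot--Savart bound on $U'V_2$ against the multiplier $N(k,\xi,t):=\tfrac{u}{1+u^2(\xi/k-t)^2}$ that was deliberately built into $m$. First I would differentiate the identity $\mu U'\equiv\sigma$ to obtain
\begin{align*}
\tfrac{U''}{U'}=-\tfrac{\mu'}{\mu},
\end{align*}
which is bounded in $L^\infty$ by $0.001$ thanks to \eqref{eq:gradual}. Writing $U''V_2=-(\mu'/\mu)(U'V_2)$ thus buys a factor $0.001$ immediately, at the cost of a multiplication by the $z$-dependent function $\mu'/\mu$ which does not commute with the Fourier multiplier $A$.

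The next step is a Biot--Savart bound of the form
\begin{align*}
|\mathcal{F}(U'V_2)(k,\xi)|\lesssim \frac{u/|k|}{1+u^2(\xi/k-t)^2}\,|\hat W(k,\xi)|\leq N(k,\xi,t)\,|\hat W(k,\xi)|,\quad |k|\geq 1,
\end{align*}
derived via an energy estimate on the elliptic problem $-\Delta_t\psi=W$, $V_2=\partial_x\psi$, using the model-case assumption $u\leq U'\leq 100u$ to treat $U'$ as essentially constant up to a bounded multiplicative error. Combined with $\|A\|\leq 1$ and the exact commutation $[A,N]=0$ of two Fourier multipliers, this yields
\begin{align*}
\|A(U''V_2)\|_{L^2}\leq 0.001\,\|NW\|_{L^2}+R\leq 0.001\,c^{-1}\|NAW\|_{L^2}+R,
\end{align*}
where $R$ collects the commutator $[A,\mu'/\mu]$, controlled by $\|\partial_z(\mu'/\mu)\|_\infty=\|U'\partial_y(\mu'/\mu)\|_\infty\leq 0.001$ from \eqref{eq:gradual} times a bounded lower-order operator in $\partial_\xi m$.

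I would then split $\langle AW,A(U''V_2)\rangle$ into contributions from $B_t$ and $G_t$. On $B_t$ the decay of $m$ contributes $\dot m/m\leq -N$, so
\begin{align*}
0.2\,\langle AW,\dot AW\rangle\leq -0.2\,\|\sqrt{N}\,\mathbf{1}_{B_t}AW\|^2,
\end{align*}
and a Cauchy--Schwarz/Young step absorbs the factor-$0.001$ bad-set part of $|\langle AW,A(U''V_2)\rangle|$ with room to spare. On $G_t$ the condition $|\xi/k-t|\geq G=0.1\nu^{-1/3}$ forces $N\leq u/(1+0.01u^2\nu^{-2/3})$, which is dominated by the enhanced-dissipation rate $u^2(\xi-kt)^2\geq u^2G^2=0.01u^2\nu^{-2/3}$ by the calibration of $G$; this good-set piece is therefore absorbed into the $u^2(\xi-kt)^2$ part of $\|MAW\|^2$ with a constant at most $0.01$.

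The main obstacle will be the Biot--Savart estimate together with the commutator analysis: matching the multiplier $N$ \emph{exactly} rather than only up to a harmless absolute constant, since once the $0.001$ from $\mu'/\mu$ has been used the remaining budget $0.01$ on the right-hand side is rather tight. This forces one to quantify, via the model-case bound $\sup\mu/\inf\mu\leq 100$, the defect of $U'V_2$ from its constant-$U'$ Fourier symbol, and to use $\|U'\partial_y(\mu'/\mu)\|_\infty\leq 0.001$ to estimate $[A,\mu'/\mu]$ by a low-order operator of small norm. Once these two ingredients are in place, the bad/good split and the Young inequality are essentially routine.
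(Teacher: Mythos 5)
Your proposal follows essentially the same route as the paper's proof: the smallness comes from $U''/U'=-\mu'/\mu$ via the balance relation, the variable-coefficient Biot--Savart law is compared with the constant-coefficient one at rate $u$ (this is the paper's claim \eqref{eq:vclaim}, proved by testing the stream-function equation), and the resulting weight $\frac{u}{1+u^2(\xi/k-t)^2}$ is absorbed by the decay of $A$ on $B_t$ and by the dissipation on $G_t$, exactly as you outline. The only bookkeeping difference is that the paper keeps $U''$ inside the duality pairing (so no commutator $[A,\mu'/\mu]$ is needed) and phrases the Biot--Savart comparison as a weighted $L^2$ estimate rather than a pointwise Fourier bound, which is also the correct reading of your step since $U'V_2$ is not an exact Fourier multiplier applied to $W$ — a defect you correctly flag and propose to control by the model-case bound $u\leq U'\leq 100u$.
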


The proofs of Lemmas \ref{lem:propertiesofm} to \ref{lem:velocityerrors} are
given in the following Section \ref{sec:lemmas}. We briefly discuss how to
combine the estimates of the the lemmas to
establish Proposition \ref{prop:model}.

\begin{proof}[Proof of Proposition \ref{prop:model}]
  Let $A, m$ be given as in Definition \ref{defi:multiplier}, let $W$ denote the
  solution of the linearized Navier-Stokes equations and consider the energy
  \begin{align*}
    E(t)= \|A(t) W(t)\|_{L^2}^2.
  \end{align*}
  Then by the results of Lemmas \ref{lem:dissipationterm},
  \ref{lem:viscosityerrors} and \ref{lem:velocityerrors} it follows that
  \begin{align}
    \label{eq:decay}
    \dt E \leq -0.05 \|\mathcal{F}^{-1} \nu^{1/3} + u^2 (\xi-kt)^2 + \frac{u}{1+u^2(\frac{\xi}{k}-t)^2} \mathcal{F} AW\|^2.
  \end{align}
  In order to conclude we recall that by Lemma \ref{lem:propertiesofm} the
  Fourier multiplier $m$ corresponding to $A$ is bounded between $c$ and $1$ and
  we may hence relate $E$ with $\|W\|_{L^2}^2$.
\end{proof}

In Section \ref{sec:glue} we will extend these estimates to the case where $\mu$
(and hence $U'$) is allowed to vary by many orders of magnitude.
In particular, the values of $\nu$ and $\mu$ then are only locally defined.
A key challenge there is to show that non-local effects and interactions between
different regions in space can be controlled in a sufficiently good way.
Finally, in Section \ref{sec:proofmain} we show that the damping estimates in
$L^2$ can be bootstrapped to yield stability in arbitrary Sobolev regularity,
following an argument of \cite{zillinger2021linear}.

\subsection{Proof of Lemmas}
\label{sec:lemmas}

We begin by discussing the properties of the multiplier $m$, which may be computed
explicitly in terms of integrals.
 \begin{proof}[Proof of Lemma \ref{lem:propertiesofm}] 
   By definition it holds that $\dt m \leq 0$ and hence
   \begin{align*}
     m(t) = m(-\infty) + \int_{-\infty}^t \dt m \leq m(-\infty)=1.
   \end{align*}
   Furthermore, we may explicitly compute $m$ as
   \begin{align}
     \label{eq:mexplicit}
     m(t,k,\xi)= \exp \left(\int_{-\infty}^t \frac{\dt m}{m} \,1_{B_\tau}(k,\xi) \,d\tau\right),
   \end{align}
   where we used that $m(-\infty,k,\xi)=1$.
   It then holds that
   \begin{align*}
     \int_{-G}^G \nu^{1/3} dt = 2 G \nu^{1/3} \leq 0.2
   \end{align*}
   is bounded by a uniform constant (and further improves for $k$ large).
   Furthermore, also
   \begin{align*}
     \int_{-G}^G \frac{u}{1+u^2 t^2} dt = \arctan(\tau)|_{\tau=-uG}^{uG} \leq \pi
   \end{align*}
   is uniformly controlled.
   Therefore, we may estimate
   \begin{align*}
     m(t,k,\xi) \geq \exp(-0.2-\pi).
   \end{align*}

   We further observe that for $t< \frac{\xi}{k}-G$ or $t>\frac{\xi}{k}-G$ it
   holds that
   \begin{align*}
     \frac{\dt m}{m}=0
   \end{align*}
   and $m$ is thus constant on these intervals.
   On the left interval $m$ equals $m(-\infty,k,\xi)=1$, while on the right it
   equals
   \begin{align*}
     \exp(-\int_{-G}^G\nu^{1/3}+ \frac{u}{1+u^2 t^2} dt):=c.
   \end{align*}
   Finally, by Parseval's identity these bounds for the multiplier $m$ are
   equivalent to $L^2$ bounds for the operator $A$.
 \end{proof}

Having established these properties of the operator $A$ we next turn to
estimating the dissipation.
Here for \emph{good} frequencies in the sense of Definition
\ref{defi:multiplier} the dissipation is rather strong.
If one instead considers \emph{bad} (that is, close to resonant) frequencies we rely on
the fact that $m$ decreases in time $t$ to provide sufficient decay.

 \begin{proof}[Proof of Lemma \ref{lem:dissipationterm}]
We recall that by Lemma \ref{lem:propertiesofm} the multiplier $m$ is multiple
of the identity on the connected components of the \emph{good} Fourier set
$G_t$.
In particular, when restricted to these sets $A$ commutes with all other operators.
In our proof we hence expand
\begin{align*}
  W = \mathcal{F}^{-1}1_{G_t^{-}}\mathcal{F}W + \mathcal{F}^{-1}1_{B_t} \mathcal{F}W +\mathcal{F}^{-1} 1_{G_t^{+}}\mathcal{F}W =: W_1 + W_2+ W_3
\end{align*}
according to this Fourier decomposition.
That is, we study
\begin{align*}
  \langle AW_i, A\dv_t(\mu \nabla_t)W_j\rangle
\end{align*}
with $i,j \in \{1,2,3\}$.

We begin by discussing the diagonal cases $i=j$ in the \emph{good} regime.

\noindent{\textbf{Estimates for $ \langle AW_j, A\dv_t(\mu \nabla_t)W_j\rangle$, $j=1$ or $3$:}}\\
We recall that $A=\Id$ or $c\ \Id$ when applied to $W_1$ or $W_3$, respectively.
Hence, we may explicitly compute that
\begin{align*}
 & \langle AW_1, A\dv_t(\mu \nabla_t)W_{1} \rangle = \langle W_1, \dv_t(\mu \nabla_t)W_{1} \rangle \\
 & = \langle W_1, \mu \p_x^2 + U' (\p_z-t\p_x) \mu U' (\p_z-t\p_x) W_1\rangle \\
 & = - \langle \p_x W_1, \mu \p_x W_1\rangle - \langle U'(\p_z-t\p_x)W_1, \mu U' (\p_z-t\p_x) W_1 \rangle 
 - \langle (\p_z U' W_1),  \mu U' (\p_z-t\p_x) W_1  \rangle \\
 & \leq -\nu \|(\p_z-t\p_x) W_1\|^2 -  \langle (\p_z U') W_1,  \mu U' (\p_z-t\p_x) W_1  \rangle,
\end{align*}
where we estimated the first summand by $0$ from above and used that $\mu
(U')^2\geq \nu$ by definition.
For the last term we recall that $\mu$ and hence $U'$ is slowly varying and we
may therefore control
\begin{align*}
 | \langle (\p_z U') W_1,  \mu U' (\p_z-t\p_x) W_1  \rangle |
  &\leq \|\frac{\p_z U'}{U'}\|_{L^\infty} \|U' W_1\|_{L^2} \|\mu U' (\p_z-t\p_x)W_1\|_{L^2} \\
  &\leq \|\frac{\p_z U'}{U'}\|_{L^\infty} 100  G^{-1} \|u (\p_z-t\p_x) W_1\|  \|\mu U' (\p_z-t\p_x) W_1\| \\
  &= \|\frac{\p_z U'}{U'}\|_{L^\infty} 100  G^{-1} \nu \|(\p_z-t\p_x)W_{1}\|_{L^2}^2,
\end{align*}
where we used that $(\p_z-t\p_x)$ is invertible with operator norm of the
inverse map
bounded by $G^{-1}$ on the \emph{good} set and controlled $|U'|\leq 100 u$.
Since $G^{-1}$ is very small, as is $\|\frac{\p_z U'}{U'}\|_{L^\infty}$, it
holds that
\begin{align}\tag{Cond.1}\label{Condition1}
  100 G^{-1} \| \frac{\p_z U'}{U'} \|_{L^\infty} \leq 0.001
\end{align}
and hence this error can be absorbed into the decay.
We thus note that 
\begin{align*}
  \langle AW_1, A\dv_t(\mu \nabla_t)W_{1} \rangle \leq -\|\sqrt{\mu}\d_x AW_1\|_{L^2}^2- 0.9 \nu \|(\p_z-t\p_x)AW_1\|^2
\end{align*}
and by the same argument
\begin{align*}
  \langle AW_3, A\dv_t(\mu \nabla_t)W_{3} \rangle \leq -\|\sqrt{\mu}\d_xA W_3\|_{L^2}^2- 0.9 \nu \|(\p_z-t\p_x) AW_3\|^2.
\end{align*}

\medskip

Since $\mu$ and $U'$ are not constant there further is some non-trivial
interaction between different \emph{good} contributions.

\noindent{\textbf{Estimates for  $\langle AW_i, A\dv_t(\mu \nabla_t)W_j\rangle$
    with $(i,j)=(1,3)$ or $(i,j)=(3,1)$}:}\\
We observe that
\begin{align*}
  \langle AW_{i}, A\dv_t(\mu \nabla_t)W_j\rangle = \langle A^2 W_{i}, \dv_t(\mu \nabla_t)W_j\rangle 
\end{align*}
and that $A^2$ is a multiple of the identity. In the following we may thus for
simplicity of notation instead consider
\begin{align*}
  \langle W_{i}, \dv_t(\mu \nabla_t)W_j\rangle &= - \langle \p_x W_i, \mu \p_x W_j \rangle - \langle U'(\p_z-t\p_x) W_i, \mu U' (\p_z-t\p_x) W_{j} \rangle \\
  &\quad + \langle U' W_i, \frac{\p_z U'}{U'} \mu U' (\p_z-t\p_x) W_{j} \rangle.
\end{align*}
By the same argument as above
\begin{align*}
  |\langle U' W_i, \frac{\p_z U'}{U'} \mu U' (\p_z-t\p_x) W_{j} \rangle| \leq \frac{1}{10} \nu  \|(\p_z-t\p_x) W_{1}\| \|(\p_z-t\p_x)W_{3}\|
\end{align*}
can be considered a negligible error term.

For the remaining terms we instead exploit that $W_i$ and $W_j$ have disjoint
support in Fourier space and that these supports have distance at least $2G$.
In particular, we may estimate
\begin{align*}
  & \quad \langle \p_x W_i, \mu \p_x W_j \rangle 
  = \int \mathcal{F}(\mu)(\xi_1) \mathcal{F}(\p_x W_i)(\xi_2) \overline{\mathcal{F}(\p_x W_i)}(\xi_1+\xi_2) d\xi_1 d\xi_2 \\
  &\leq \|\frac{1}{\xi_1}\mathcal{F}(\p_z\mu)(\xi_1)\|_{L^1(|\xi_1|\geq 2G)} \|\p_x W_1\|_{L^2} \|\p_x W_3\|_{L^2} \\
  &\leq G^{-1} \|\mathcal{F}(\p_z\mu)(\xi_1)\|_{L^1(|\xi_1|\geq G)}\frac{1}{\inf(\mu)} \|\sqrt{\mu} \p_x W_1 \|_{L^2} \|\sqrt{\mu} \p_x W_3 \|_{L^2}. 
\end{align*}
This interaction term can hence absorbed by the decay provided
\begin{align}
  \label{Condition2}
  G^{-1}\|\mathcal{F}(\p_z\mu)(\xi_1)\|_{L^1(|\xi_1|\geq G)}\frac{1}{\inf(\mu)} \leq 0.1,
\end{align}
which is part of our assumption \eqref{eq:gradual} that the relative size of
$\mu$ is slowly varying. We remark that here $\mathcal{F}(\mu)$ refers to the
distributional Fourier transform, since $\mu$ is bounded but not in $L^1$.

By the same argument and using that
\begin{align*}
  \frac{\p_z U'}{U'} = - \frac{\p_z \mu}{\mu}
\end{align*}
we observe that
\begin{align*}
  & \quad |\langle U'(\p_z-t\p_x) W_i, \mu U' (\p_z-t\p_x) W_{j} \rangle| \\
  &\leq \frac{\sup_{|\xi_1|\geq 2 G} \mathcal{F}(U')(\xi_1)}{u} \nu  \|(\p_z-t\p_x) W_{1}\| \|(\p_z-t\p_x)W_{3}\|,
\end{align*}
which can be absorbed provided
\begin{align}
  \frac{\sup_{|\xi_1|\geq 2 G} |\mathcal{F}(U')(\xi_1)|}{\inf(U')} \leq 0.5,
\end{align}
which holds by assumption.

\noindent{\textbf{Estimates for terms involving $W_2$:}}\\
It remains to discuss the influence of the part $W_2$ Fourier-localized in the
\emph{bad set}.

We first study the self-interaction term:
\begin{align*}
  \langle AW_{2}, A\dv_t(\mu \nabla_t)W_{2} \rangle &= \langle AW_2, A \p_x \mu \p_x W_2\rangle \\
                                                    &\quad +\langle A W_2, A (\p_z-t\p_x) U' \mu U' (\p_z-t\p_x) W_2 \rangle \\
  &\quad - \langle A W_2, A (\p_zU') \mu U' (\p_z-t\p_x) W_2 \rangle.
\end{align*}
Since none of $A$, $\mu$ and $U'$ are constant, we cannot easily appeal to the
negativity of the elliptic operator in this regime.
Instead we use that
\begin{align*}
  \langle A W_2, A (\p_z-t\p_x) U' \mu U' (\p_z-t\p_x) W_2 \rangle 
   \leq C \nu G^2 \|AW_2\|_{L^2}^2 
\end{align*}
since $W_2$ is localized in the Fourier set where $|\xi-kt|$ is not large (yet).
We recall that here $G$ was chosen in such a way that $\nu G^2 < 0.01 \nu^{1/3}$ and
thus the dissipation in this \emph{bad} region is weaker than desired.

Similarly, we may estimate
\begin{align}
  \label{eq:dxpart}
  \langle AW_2, A \p_x \mu \p_x W_2\rangle \leq C \|\sqrt{\mu}\p_x A W\|_{L^2}^2 \leq C \nu k^2 \|AW\|_{L^2}^2,
\end{align}
where we used that $(U')^2\geq 1$ and hence $\inf(\mu) \leq \nu$.
As remarked following Definition \ref{defi:multiplier} we may without loss of
generality only consider those $k$ for which
\begin{align*}
  \nu k^2 < 0.001 \nu^{1/3}
\end{align*}
is much smaller than the enhanced dissipation rate, since otherwise this
horizontal dissipation already achieves the desired decay.
With this understanding we restrict to this case for the reminder of the
article.
It then also holds that
\begin{align*}
  \langle AW_2, A \p_x \mu \p_x W_2\rangle \leq C \nu^{1/3} \|AW_2\|_{L^2}^2
\end{align*}
with a small constant $C$.
Furthermore, we may control 
\begin{align*}
  |\langle A W_2, A (\p_zU') \mu U' (\p_z-t\p_x) W_2 \rangle|
   & \leq C \|\frac{\p_zU'}{U'}\|_{L^\infty} \|\mu U' (\p_z-t\p_x)W_2\|_{L^2} \|A\|^2 \\
  & \quad \quad (\|\p_x W_2\|_{L^2} + \|u(\p_z-t\p_x)W_2\|_{L^2}) \\
  & \leq C \nu^{1/3} \|AW_2\|_{L^2}^2
\end{align*}
with a small absolute constant $C$, by our choice of cut-off $G$.

Since the decay of $A$ yields that
\begin{align*}
  \langle A W_2, \dot{A}W_2 \rangle \leq -\nu^{1/3} \|AW_2\|_{L^2}^2
\end{align*}
these contributions can be absorbed.

Finally, it remains to discuss the cross terms
\begin{align*}
  \langle AW_{i}, \dv_t(A \mu \nabla_t)W_{j} \rangle &=  \langle AW_i, A \p_x \mu \p_x W_j\rangle \\
                                                    &\quad +\langle A W_i, A (\p_z-t\p_x) U' \mu U' (\p_z-t\p_x) W_j \rangle \\
  &\quad + \langle A W_i, A (\frac{\p_zU'}{U'}) U' \mu U' (\p_z-t\p_x) W_j \rangle,
\end{align*}
where one $i,j$ equals $2$.
For the first term we estimate by
\begin{align*}
  C \|\sqrt{\mu}\p_x A W_i\|_{L^2} \|\sqrt{\mu}\p_x A W_j\|_{L^2}, 
\end{align*}
which can be absorbed as above.
Similarly, the second term can be controlled by
\begin{align*}
  C \|u(\p_z-t\p_x)W_{1,3}\|_{L^2} \nu^{1/3} \|W_2\|
\end{align*}
and the last term by
\begin{align*}
  C G^{-1}  \|\frac{\p_z U'}{U'}\|_{L^{\infty}} \|u(\p_z-t\p_x)W_{1,3}\|_{L^2} \nu^{1/3} \|W_2\|.
\end{align*}
We may thus use use Young's inequality and the previously obtained damping
estimates to absorb these error terms, which concludes the proof.
 \end{proof}
 
In the previous lemma we have established dissipation due the ``main'' term of
the dissipation operator. The following lemma shows that this decomposition is
indeed and all terms involving higher derivatives of $\mu$ can be considered
lower order.
We recall here that by assumption $\mu$ is slowly varying in the sense that
\begin{align*}
  \|\frac{\mu'}{\mu}\|_{L^\infty}
\end{align*}
and
\begin{align*}
  \|\p_z \frac{\mu'}{\mu}\|_{L^\infty}
\end{align*}
are small.

\begin{proof}[Proof of Lemma \ref{lem:viscosityerrors}]
Given the decay established in Lemma \ref{lem:dissipationterm} we may use
integration by parts, Hölder's inequality and Young's inequality to reduce our
proof to establishing bounds on norms of
\begin{align*}
  \mu' \nabla_tv_1
\end{align*}
and of
\begin{align*}
  \mu'' \p_x v_2.
\end{align*}
In analogy to the proof of Lemma \ref{lem:dissipationterm} we here again
distinguish between the parts of $v_1,v_2$ generated by the vorticity in the
\emph{good} region $W_1, W_3$ and the one localized in the \emph{bad} region
$W_2$.

More precisely, we note that as in \cite{coti2019degenerate} in these estimates
we may replace $v= \nabla_t \phi$, defined in terms of the usual stream
function, by simpler potential using the averaged value $u$ of $U'$.
For this purpose we define $\psi$ to satisfy
\begin{align*}
  (\p_x^2 + u^2 (\p_z-t\p_x)^2)\psi = W = (\p_x^2 + (U'(\p_z-t\p_x))^2)\phi
\end{align*}
with suitable integrability assumptions at infinity.
Then testing these equations with either $\psi$ or $\phi$ one obtains that the energies
\begin{align*}
  \|v\|_{H^N}^2 \approx \|\p_x\psi\|^2 + \|u(\p_y-t\p_x)\psi\|^2
\end{align*}
are comparable (in the sense of bilinear forms acting on $W$).

In the following we may thus discuss
\begin{align*}
  V=(-u(\p_z -t\\p_x), \p_x)\psi
\end{align*}
in place of $v$.

We then observe that in the \emph{good} Fourier region
\begin{align*}
  \mathcal{F}(\p_x V_2) = \frac{k^2}{k^2 + u^2(\xi-kt)^2} \mathcal{F}(W)
\end{align*}
is controlled by $c^{-2} \mathcal{F}(W)$.

Similarly, in the \emph{good} Fourier region
\begin{align*}
  \nabla_t V_1 
\end{align*}
can be controlled by the dissipation, since
\begin{align*}
  \frac{1}{1+u^2(\frac{\xi}{k}-t)^2} \leq \frac{1}{1+G^2}\ll \nu^{1/3}.
\end{align*}

It thus only remains to discuss the \emph{bad} Fourier region. However, there
the weight $A$ is chosen in just such a way that
\begin{align*}
  \|(\p_x, u(\p_z-t\p_x))\psi_{A}\|^2
\end{align*}
can be absorbed using
\begin{align*}
  \langle A W, \dot{A}W \rangle.
\end{align*}
More precisely, we may estimate
\begin{align*}
  u \|(\p_x, u(\p_z-t\p_x))\psi_{A}\|^2 = \int \frac{u}{k^2+u^2(\xi-kt)^2} |AW|^2\\
  \leq \int ( 1_{B_t}(k,\xi) \frac{u}{k^2+u^2(\xi-kt)^2}+0.1 \nu^{1/3}1_{G_t}) |AW|^2,
\end{align*}
since
\begin{align*}
  \frac{u}{k^2+u^2(\xi-kt)^2}< 0.1 \nu^{1/3}
\end{align*}
in the \emph{good} region. The right-hand-side then is controlled by the decay
of $A$ and by the dissipation, which concludes the proof.

\end{proof}

Finally we turn to the control of the error in due to the convective term:
\begin{align*}
  U'' v_2= \frac{U''}{U'} U' v_2.
\end{align*}

\begin{proof}[Proof of Lemma \ref{lem:velocityerrors}]
  We remark that in the case of very small effective viscosity one cannot expect
  to control
  \begin{align}
    \label{eq:velocityerror}
    \langle A W, A U'' v_2 \rangle
  \end{align}
  in terms of the dissipation.
  Hence, we fall back to the estimates developed for the inviscid case in
  \cite{Zill3}. More precisely, let again $u=\min U'$ and define the constant
  coefficient stream function $\psi_A$ and $\psi$ as the solution of the equations
  \begin{align*}
    (\p_x^2 + u^2 (\p_z-t\p_x)^2) \psi_A =A W,
  \end{align*}
  and
  \begin{align*}
    (\p_x^2 + u^2 (\p_z-t\p_x)^2) \psi =W,
  \end{align*}
  respectively. Note that both differential operators involve constant
  coefficients and hence both $\psi$ and $\psi_A$ can be explicitly computed in
  terms of Fourier multipliers.
  
  Furthermore, integrating by parts and using that $v_2=\p_x \phi$ we control
  \eqref{eq:velocityerror} by
  \begin{align*}
    \|(\p_x, u(\p_z-t\p_x)) \psi_A\|_{L^2} \|A\| \|U''\|_{W^{1,\infty}} \|(\p_x, U' (\p_z-t\p_x)) \p_x \phi\|_{L^2},
  \end{align*}
  where $\|A\|$ denotes the $L^2$ operator norm of $A$.

  We next claim that it holds that
  \begin{align}
    \label{eq:vclaim}
    \begin{split}
    \|(\p_x, U' (\p_z-t\p_x)) \p_x \phi\|_{L^2} &\leq \|(\p_x, u(\p_z-t\p_x)) \p_x \psi\|_{L^2}\\
                                                & \leq \|A^{-1}\| \|(\p_x, u(\p_z-t\p_x)) \p_x \psi_A\|_{L^2}. 
                                              \end{split}
  \end{align}
  Assuming this claim for the moment, we may compute
  \begin{align*}
    \|(\p_x, u(\p_z-t\p_x)) \psi_A\|_{L^2}^2 = \int \frac{1}{k^2+u^2(\xi-kt)^2} |\mathcal{F}(AW)(t,k,\xi)|^2
  \end{align*}
  and hence observe that the velocity error \eqref{eq:velocityerror} can be
  estimated by
  \begin{align*}
    \int \|A\| \|A^{-1}\| \left\|\frac{U''}{u}\right\|_{W^{1,\infty}} \frac{|k|u}{k^2+u^2(\xi-kt)^2} |\mathcal{F}(AW)(t,k,\xi)|^2
  \end{align*}
  Since by assumption
  \begin{align*}
    \|A\| \|A^{-1}\| \|\frac{U''}{u}\|_{W^{1,\infty}} 
  \end{align*}
  is small this error can thus indeed be absorbed using the
  dissipation and the decay of $A(t)$.

  It remains to prove the claim \eqref{eq:vclaim} for which argue as in
  \cite{Zill3}.
  That is, we test the stream function equation
  \begin{align*}
    (\p_x^2 + (U'(\p_z-t\p-x))^2) \phi = W = (\p_x^2 + u^2 (\p_z-t\p_x)^2) \psi
  \end{align*}
  with $\phi$ (or rather $\frac{U'}{u}\phi$) to obtain that
  \begin{align*}
    \|(\p_x, U' (\p_z-t\p_x)) \phi\|_{L^2}^2 \leq \|(\p_x, U' (\p_z-t\p_x)) \phi\|_{L^2} \|(\p_x, u (\p_z-t\p_x)) \psi\|_{L^2},
  \end{align*}
  which yields the first estimate of \eqref{eq:vclaim}.
  The second estimate \eqref{eq:vclaim} immediately follows from the explicit
  characterization of $\psi$ and $\psi_A$ in terms of Fourier multipliers (which
  only differ by multiplication with the Fourier weight of $A$).
  
\end{proof}
We remark that unlike the estimates of Lemmas \ref{lem:dissipationterm} and
\ref{lem:viscosityerrors} the above estimate does not explicitly involve the
viscosity and has been obtained in the inviscid case. This lemma hence imposes
the strongest restrictions on the profile $U$ (and hence equivalently on $\mu$).
As discussed following the statement of Theorem \ref{thm:main} we do not expect
the smallness condition to be optimal, but rather a non-resonance/spectral
condition as in \cite{Zhang2015inviscid}.
However, the present stronger assumption allows for an approach in terms of
a Lyapunov functional.

\section{Localization and Non-local Interactions}
\label{sec:glue}

In this section we consider the linearized equations \eqref{eq:1} in vorticity
formulation
\begin{align*}
  \dt \omega + U(y) \p_x \omega - U'' v_2 &= \dv(\mu \nabla \omega) - \dv(\mu \nabla v_1) - \mu'' \p_x v_2,\\
  v&= \nabla^{\bot}\Delta^{-1}\omega.
\end{align*}
Unlike in Section \ref{sec:model2} we here allow for $\mu$ (and hence also $U'$) to vary by many
orders of magnitude.

Our main result of this section, Proposition \ref{prop:L2} then establishes the stability and damping results of Theorem
\ref{thm:main} in $L^2$, for which a special case had been treated in
Proposition \ref{prop:model}.

\begin{prop}
  \label{prop:L2}
  Let $\mu, U$ satisfy the assumptions of Theorem \ref{thm:main}. Then there exists a
  time-dependent family of operators $A(t)$ such that for any initial data
  $\omega_0\in L^2$ the solution $W(t)$ with that initial data satisfies
  \begin{align*}
  c \|W(t)\|_{L^2} \leq \|A(t)W(t)\|_{L^2} &\leq \|W(t)\|_{L^2}.
  \end{align*}
  Furthermore, it holds that 
  \begin{align*}
    \frac{d}{dt} \|AW \|_{L^2}^2 \leq -0.001 \left(\|(\mu (U')^2)^{1/6} W\|_{L^2}^2 + \|\sqrt{\mu}U'(\p_z-t\p_x)W\|_{L^2}^2 + \|\sqrt{U'} v\|_{L^2}^2\right).
  \end{align*}
\end{prop}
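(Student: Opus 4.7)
The plan is to reduce the variable-viscosity case to the Section~\ref{sec:model2} analysis by means of a spatial partition adapted to the level sets of $\mu$, together with the construction of a glued multiplier. The first assumption in \eqref{eq:gradual}, $\|\mu'/\mu\|_{L^\infty}<0.001$, implies that $\ln \mu$ has very small Lipschitz constant; hence there is a length scale $L$ (independent of the position) such that on any interval of length at most $L$ the ratio $\sup \mu/\inf \mu$ is bounded by a small absolute constant (e.g.\ $100$, matching the hypothesis of Proposition~\ref{prop:model}). I would therefore cover $\R$ by a locally finite family of intervals $\{I_j\}$ of comparable length $\sim L$, and introduce a smooth quadratic partition of unity $\sum_j \chi_j^2=1$ subordinate to a mild enlargement of this cover, with derivatives of $\chi_j$ uniformly of order $L^{-1}$.

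On each $I_j$, the hypotheses of Proposition~\ref{prop:model} are satisfied after restriction and the local change of variables $z=U(y)$, which is well defined since $U$ is strictly monotone. This yields a local Fourier multiplier $m_j$ via Definition~\ref{defi:multiplier} built from $\nu_j=\inf_{I_j}\mu (U')^2$ and $u_j=\inf_{I_j}U'$; let $A_j$ denote the corresponding operator. I would then set
\[
A \;=\; \sum_j \chi_j A_j \chi_j,
\]
so that the two-sided bound $c\|W\|_{L^2}\le \|AW\|_{L^2}\le \|W\|_{L^2}$ is inherited from Lemma~\ref{lem:propertiesofm} via the partition property. Differentiating $\|AW\|^2$ and expanding, one obtains a sum of diagonal contributions (a single $A_j$ on each side) and cross terms mixing different indices.

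Each diagonal contribution is exactly an instance of the Proposition~\ref{prop:model} proof applied on $I_j$, supplying decay of order $\nu_j^{1/3}$ together with the local dissipation $\|\sqrt{\mu}U'(\p_z-t\p_x)\chi_j W\|^2$ and the local velocity term $\|\sqrt{U'}v\|^2$. Since $\nu_j^{1/3}$ is comparable to $(\mu(U')^2)^{1/6}$ on $I_j$ (by the bounded-ratio property on $I_j$), summing over $j$ reproduces the three global quantities on the right-hand side. Commutator terms between $\chi_j$ and the differential operators contribute at most $\|\chi_j'\|_\infty^2\lesssim L^{-2}$ times lower-order pieces, absorbable into the $\nu_j^{1/3}$-decay provided $L$ is chosen large enough, which is guaranteed by the smallness of $(\sup \mu)^2/\inf \mu$ together with the first bound in \eqref{eq:gradual}.

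The main obstacle is the genuinely non-local Biot--Savart contribution: the velocity $V$ at a point of $I_k$ depends on $W$ from every $I_j$ via a kernel decaying only as a power of $\dist(I_j,I_k)$, so regions with very different local decay rates interact. This is where the second part of \eqref{eq:gradual}, the Fourier-tail bound on $\mu'$ at scales above $\sup(\mu)^{-1/3}$, becomes essential: when estimating cross terms such as $\langle \chi_k A_k\chi_k W,\,\dv_t(\mu'\nabla_t V_1)\rangle$ between distant indices, I would split $\mu'$ into low- and high-frequency pieces, dominate the high-frequency piece directly by this $L^1$ bound (which matches the $G^{-1}\sim \sup(\mu)^{-1/3}$ scale of the multiplier, as in condition \eqref{Condition2}), and treat the low-frequency piece as essentially local so that the Biot--Savart kernel decay in $\dist(I_j,I_k)$, combined with Cauchy--Schwarz and Schur's test over the $(j,k)$ sum, yields a constant small compared to the weaker of the two local decay rates. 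The cross velocity error $\langle A_k\chi_k W, A_j\chi_j U'' V_2\rangle$ is handled analogously, using the inviscid-style estimate of Lemma~\ref{lem:velocityerrors} with the constant-coefficient stream function $\psi_j$ attached to each region, and exploiting the smallness of $\|U'\p_y(\mu'/\mu)\|_{L^\infty}=\|U''/U'\cdot\text{(stuff)}\|$. I expect this bookkeeping — showing that kernel decay in $\dist(I_j,I_k)$ beats the possible gap between $\nu_j$ and $\nu_k$ — to be the technically most delicate step, justifying a section devoted to it.
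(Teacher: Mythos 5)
Your overall architecture coincides with the paper's: a partition of $\R$ adapted to the slow variation of $\ln\mu$, a quadratic partition of unity $\sum_j\chi_j^2=1$, local multipliers $A_j$ built from Definition~\ref{defi:multiplier} with the local $\nu_j,u_j$, diagonal terms handled by the model case (Proposition~\ref{prop:model}), and commutators with $\chi_j$ absorbed using the lower bound on the interval lengths. (The paper packages this slightly differently: it extends $\mu|_{I_j},U|_{I_j}$ to globally admissible $\mu_j,U_j$ so that the model proposition applies verbatim, and it defines $A$ abstractly through the energy $E(t)=\sum_j\|A_j\chi_jW\|^2$ rather than as $\sum_j\chi_jA_j\chi_j$; these are cosmetic differences.)

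The genuine gap is in your treatment of the non-local Biot--Savart cross terms, which you yourself flag as the delicate step. Your proposed mechanism --- that the kernel decay in $\dist(I_j,I_k)$ ``beats the possible gap between $\nu_j$ and $\nu_k$'' so that distant contributions can be absorbed into ``the weaker of the two local decay rates'' --- does not work under the hypotheses of Theorem~\ref{thm:main}: condition \eqref{eq:gradual} allows $\mu$, and hence the local rates $\nu_j^{1/3}$, to vary \emph{exponentially} across $\R$, while the kernel decay you invoke is only a power of the distance, so a Schur-type sum over $(j,k)$ cannot produce a constant uniformly small relative to the weaker local rate. This is precisely the obstruction the paper points out in the discussion after the corollary in the introduction (Biot--Savart decay cannot compensate for differences in dissipation rates), and it is why the paper does not perform any pairwise $(j,k)$ bookkeeping. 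Instead, in Lemma~\ref{lem:velocity} the velocity error is absorbed into the $\nu$-\emph{independent} part of each local multiplier's decay, the term $u_j/(1+u_j^2(\xi/k-t)^2)$ with its finite $\arctan$ budget (as in the inviscid estimate of Lemma~\ref{lem:velocityerrors}), and the global velocity is related to the local ones not through kernel decay but through an elliptic comparison obtained by testing the stream-function equations, yielding $\|\nabla\phi\|^2\approx\sum_j\|\nabla_j\phi_j\|^2$ with constants independent of the spread of $\mu$. You do mention the local constant-coefficient stream functions $\psi_j$, which is the right ingredient, but your final absorption is into the wrong quantity; without replacing the ``kernel decay versus rate gap'' step by an argument of this uniform elliptic type, the proof does not close. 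A smaller related point: the Fourier-tail hypothesis in \eqref{eq:gradual} is used in the paper (via \eqref{Condition2}) to control the interaction of the two Fourier-separated good regions $G_t^{\pm}$, not to split spatial cross terms of $\mu'$, so it cannot be expected to substitute for the missing uniformity in $j,k$.
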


We recall that as part of the assumptions of Theorem \ref{thm:main} we require
that \eqref{eq:gradual} holds:
\begin{align*}
 \|\frac{\mu'}{\mu}\|_{W^{1,\infty}}< 0.1.
\end{align*}
This quantifies the requirement that $\mu$ may only change
gradually (but since $\R$ is unbounded it may change by many orders of magnitude
over all).
This constraint on the relative rate of change then further implies that when
restricted to any interval $I$ of suitable size, it holds that
\begin{align*}
  \frac{\max_{I}\mu}{\min_I \mu} \leq 100.
\end{align*}
Thus, if we extend the restrictions $\mu|_{I}, U|_{I}$ by constants to functions
$\mu_I, U_I$ on all of $\R$, then these extensions satisfy the assumptions of Section \ref{sec:model2}.
Thus we may ``locally'' reduce to that model setting. However, these
restrictions and extensions have to be related to the actual whole space problem \eqref{eq:1}
(see Lemma \ref{lem:normestimates}) and have to be combined to control growth of the whole
space problem (see Lemmas \ref{lem:dissipation2}, \ref{lem:velocity} and
\ref{lem:viscosity}).

Our main challenges in the following are to formalize this intuition and to
control non-local errors.
More precisely, since the velocity is non-local and so are several commutator
terms, it is not possible to just restrict $W$ and reduce estimates to the ones
of Section~\ref{sec:model2}.
Instead we will show that in the sum over all localized estimates still holds.

\subsection{Partitions and Non-local Interaction}
\label{sec:partition}
The following lemma establishes the existence of a partition of $\R$ such that
on each interval of the partition $\mu$ (and hence $U'$) is comparable to a
constant.
Furthermore, the sizes of these intervals is bounded below and hence cut-off
functions and partitions of unity corresponding to this partition have
controlled $W^{k,\infty}$ norms.
Using these partitons we may also construct extensions of the restrictions of
$\mu, U$ which satisfy the assumptions of the model setting studied in Section \ref{sec:model2}.
\begin{lem}[Partitions]
  \label{lem:partitions}
  Let $N \in \N$ and $\mu \in C^{N+2}$ be as in Theorem \ref{thm:main}. Then
  there exits a partition $(I_j)_{j\in \Z}$ of $\R$ into intervals such that
  \begin{align}
    \label{eq:localbound}
    \frac{\sup_{3 I_j}\mu}{\inf_{3 I_j} \mu} \leq 50
  \end{align}
  for all $j$, where $3 I_j$ denotes the rescaled intervals with the same
  center.
  Furthermore, the length of each interval $I_j$ is bounded below
  by $1$.

  Associated with this partition there exists a family of non-negative functions
  $\chi_j \in C_c^{\infty}$ with $\supp(\chi_j^2) \subset 3 I_j$ such that
  $\chi_j^2$ is a partition of unity.

  For each $j$ there exist $\mu_j, U_j \in C^{N+2}(\R)$ such that
  \begin{align*}
    \mu_j = \mu, U_j =U \text{ in } I_j, \\
    \mu_j U_j \equiv \text{const.} \text{ in } \R  
  \end{align*}
  and so that $\mu_j$ and $\p_y U_j$ are constant outside $3 I_j$ and
  \begin{align*}
    \frac{\max_{\R}\mu_j}{\min_{\R} \mu_j} \leq 100.
  \end{align*}
\end{lem}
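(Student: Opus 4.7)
The plan is to build the partition by a uniform-length decomposition of $\R$ exploiting the smallness in \eqref{bound:mu}, obtain the cut-offs $\chi_j$ by a standard rescaled-bump construction, and finally produce the extensions $\mu_j, U_j$ by interpolating $\mu$ against constants in the buffer $3I_j \setminus I_j$ and then integrating the hydrostatic balance to define $U_j$.

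First I would fix a length $L$ with $1 \leq L \leq$ (some absolute constant); in fact $L=1$ suffices. Setting $I_j := [jL, (j+1)L)$ gives the partition, and the hypothesis $\|\p_y \ln \mu\|_{L^\infty} \leq 0.001$ gives
\begin{align*}
  |\ln \mu(y_1) - \ln \mu(y_2)| \leq 0.001 \cdot 3L \leq 0.003
\end{align*}
for $y_1,y_2 \in 3I_j$, hence $\sup_{3I_j}\mu/\inf_{3I_j}\mu \leq e^{0.003}$, well below $50$. For the partition of unity I would fix a bump $\chi \in C_c^\infty(\R)$ with $\supp \chi \subset (-3/2, 3/2)$ and $\chi \equiv 1$ on $[-1/2, 1/2]$, set $\tilde\chi_j(y) := \chi((y-(j+1/2)L)/L)$, and renormalize by $\chi_j := \tilde\chi_j/\sqrt{\sum_k \tilde\chi_k^2}$. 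The finite overlap and translation invariance of the construction imply that $\sum_k \tilde\chi_k^2$ is bounded above and below by positive constants, so $\chi_j$ is well-defined, $\chi_j^2$ partitions unity, $\supp(\chi_j^2) \subset 3 I_j$, and $\chi_j$ enjoys uniform $C^{N+2}$ bounds.

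For the extension step I would work on one interval $I_j$ at a time. Let $\phi_j^\pm \in C^\infty(\R)$ be cut-offs that vanish on $I_j$ and on the far side of $I_j$, equal $1$ outside the outer quarter of $3I_j \setminus I_j$, and interpolate smoothly in between. With $c_j^\pm := \mu(y_j^\pm)$ the boundary values of $\mu$ on $I_j$, I set
\begin{align*}
  \mu_j(y) := (1-\phi_j^-(y)-\phi_j^+(y))\,\mu(y) + \phi_j^-(y)\,c_j^- + \phi_j^+(y)\,c_j^+
\end{align*}
on $3I_j$ and extend by the constants $c_j^\pm$ beyond. Then $\mu_j$ is $C^{N+2}$ globally, agrees with $\mu$ on $I_j$, is constant outside $3I_j$, and takes values in the convex hull of the range of $\mu$ over $3I_j$, so $\max\mu_j/\min\mu_j \leq 50$. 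Writing $\sigma := \mu \p_y U$, which is a non-zero constant by \eqref{eq:hydrobalance}, I define $U_j(y) := U(y_j^-) + \int_{y_j^-}^y \sigma/\mu_j\,dy'$. Then $\mu_j \p_y U_j \equiv \sigma$ globally; on $I_j$, $\p_y U_j = \sigma/\mu = \p_y U$ with matching boundary value, so $U_j = U$ there; outside $3I_j$, $\p_y U_j = \sigma/c_j^\pm$ is constant.

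The only non-routine point is ensuring that $(\mu_j, U_j)$ still obeys the hypotheses of Section~\ref{sec:model2}, in particular the $W^{1,\infty}$-smallness of $\p_y \ln \mu_j$ needed to invoke Proposition~\ref{prop:model}. Taking $\phi_j^\pm$ at the natural scale $L$, so that $\|\p_y\phi_j^\pm\|_\infty \lesssim L^{-1}$ and $\|\p_y^2\phi_j^\pm\|_\infty \lesssim L^{-2}$, and noting $|c_j^\pm - \mu(y)| \lesssim 0.001\, L \cdot \sup_{3I_j}\mu$ on $\supp(\phi_j^\pm)$ by the Lipschitz bound, the extra contributions to $|\p_y \ln \mu_j|$ and $|\p_y^2 \ln \mu_j|$ are of size $0.001$ times an absolute constant. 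This is absorbed by choosing $L$ once and for all (equivalently, by the margin in the constants of \eqref{bound:mu} and \eqref{eq:gradual}). All the other assertions of the lemma are then immediate from the construction.
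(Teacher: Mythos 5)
Your proposal is correct and proves every assertion of the lemma, but it takes a genuinely different route from the paper on both construction steps. For the intervals, the paper proceeds greedily: starting from $y_1=0$ it chooses each endpoint $y_2$ \emph{maximally} subject to \eqref{eq:localbound} (capping the length at $1000$ when $\mu$ is nearly constant), so that the smallness $|\p_y\ln\mu|\leq 0.001$ forces the intervals to be \emph{long} (length of order $10^3$); you instead take intervals of a fixed length $L$ and observe that $\sup_{3I_j}\mu/\inf_{3I_j}\mu\leq e^{0.003L}\ll 50$, which is simpler and equally valid for the statement as written. For the extensions, the paper blends $\mu$ with its boundary values using the partition of unity itself, $\mu_j=\mu(y_1)\sum_{l<j}\chi_l^2+\mu\chi_j^2+\mu(y_2)\sum_{l>j}\chi_l^2$, and then integrates $\p_y U_j:=C/\mu_j$; your dedicated interpolation cut-offs $\phi_j^\pm$ in the buffer $3I_j\setminus I_j$ achieve the same thing and are arguably cleaner, since they make $\mu_j=\mu$ on $I_j$ exact by construction, keep $\mu_j$ in the convex hull of the range of $\mu$ on $3I_j$, and give $\mu_j\p_yU_j\equiv\sigma$ directly. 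Your final paragraph on preserving the smallness hypotheses is also more explicit than the paper's one-line remark that the bounds ``only deteriorate by a small factor.''

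Two caveats, neither fatal to the lemma itself. First, the choice $L=1$ satisfies the letter of the statement, but downstream (proof of Lemma \ref{lem:dissipation2} and the commutators $[\dv_t\mu\nabla_t,\chi_j]$) the paper explicitly uses that derivatives of $\chi_j$ are of size $L^{-n}\ll 1$, i.e.\ that the intervals are large; with $L=1$ your cut-offs have $O(1)$ derivatives. Your construction tolerates this for free — the ratio bound holds for any $L\lesssim 10^3$ — so you should simply fix $L$ large rather than $L=1$. Second, your claim that the extra contribution to $\|\p_y\ln\mu_j\|_{L^\infty}$ is absorbed ``by choosing $L$'' is not quite right: since $|\p_y\phi_j^\pm|\lesssim L^{-1}$ while $|c_j^\pm-\mu|\lesssim 0.001\,L\,\mu$ on the relevant region, that contribution is of size $0.001$ times an absolute constant \emph{independently} of $L$, so it is really the stated margin in the constants that saves you — a looseness the paper's own blending construction shares in identical form.
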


\begin{proof}[Proof of Lemma \ref{lem:partitions}]
  We recall that by assumption on $\mu$ the relative rate of change
  $\frac{\mu'}{\mu}$ is bounded.
  Hence, given any two points $y_1,y_2$ we observe that
  \begin{align*}
    \frac{\mu(y_2)}{\mu(y_1)} = \exp(\ln(\mu(y_2))-\ln(\mu(y_1))) = \exp\left( \int_{y_1}^{y_2}\frac{\mu'}{\mu} dy \right)
  \end{align*}
  is bounded in terms of $|y_2-y_1|$, also when exchanging $y_{1}$ and $y_2$.

  In order to construct the intervals $I_j$ we thus pick an initial point
  $y_1=0$ and then choose $y_2>0$ (or $y_2<0$) maximally such that
  \eqref{eq:localbound} holds with $I_j=(y_1,y_2)$.
  By the above calculation the size of $I_j$ is bounded below. Therefore,
  iterating this procedure with $y_1$ chosen as a boundary point of a previously
  generated interval, we obtain the desired partition $(I_j)_{j}$ of $\R$ with the
  size of each $I_j$ bounded below.

  We remark that in this greedy procedure it is possible that for up to two choices of $j$ the
  interval $I_j$ is unbounded (in which case the above procedure only generates
  finitely many $j$). In this case we may instead impose that $y_2$ should be
  maximized under the additional constraint that $|y_2-y_1|\leq 1000$.

  It is a classical result that given such a partition of intervals there exists
  a partition of unity for which the square root of each function is still
  smooth and such that bounds on $C^k$ norms are uniform in $j$ (since the size
  of each $I_j$ is bounded below).

  Furthermore, given this partition unity we construct an extension of $\mu$ by
  \begin{align*}
    \mu_j = \mu(y_1) \sum_{l<j}\chi_l^2  + \mu \chi_j^2 + \mu(y_2) \sum_{l>j}\chi_l^2.
  \end{align*}
  The associated shear profile $U_j$ is then constructed by integrating
  \begin{align*}
    \p_y U_j := \frac{C}{\mu_j}
  \end{align*}
  with $C$ and the the constant of integration chosen such that
  $U_j(y_1)=U(y_1)$ and $\p_y U_j(y_1)= \p_y U(y_1)$.
  This then directly implies the desired bounds, where we used that the
  derivatives of the partition of unity are bounded and hence the estimate
  \eqref{eq:localbound} only possibly deteriorates by a small factor under this extension.
\end{proof}

Given these partitions we may naturally define operators acting on $\chi_jW$ by
using the results of Section \ref{sec:model2}.
\begin{defi}[Localized Fourier weights]
  \label{defi:localweights}
  Let $\chi_j^2$ be the partition of unity of Lemma \ref{lem:partitions} and let
  $\mu_j, U_j$ be the collection of viscosities and shear associated with these
  partitions.
  
  We then define $A_j$ to be the operator as given in Definition
  \ref{defi:multiplier} for $\mu, U$ replaced by $\mu_j, U_j$.
  Furthermore, we define
  \begin{align*}
    W_j: = \chi_j W
  \end{align*}
  and the energy functional
  \begin{align*}
    E(t)=\sum_j \langle A_j W_j, A_j W_j \rangle.
  \end{align*}
\end{defi}
We remark that here for each interval $I_j$ we consider the $L^2$ inner product
on $L^2(dz_j)$. However, since $\chi_j$ is compactly supported in $3 I_j$ we
observe that
\begin{align*}
  \langle A_j W_j, A_j \chi_j \p_tW \rangle_{L^2(dz_j)}= \langle A_j^2 W_j, \chi_j \p_tW  \rangle \leq \|A_j^2 W_j\|_{L^2(dz_j)} \|\chi_j \p_tW\|_{L^2(dz)},
\end{align*}
where in the last step we used that $z$ and $z_j$ agree on this support.
In view of this compatbility with $L^2(dz)$ we may hence transparently switch
between the spaces $L^2(dz_j), j \in \Z$ in several estimates and thus suppress
this formal $j$ dependence in our notation. 

Since $\chi_j^2$ is a partition of unity, the norms of $W$ and the sum of the
norms of $W_j$ are comparable.
\begin{lem}[Norm estimates]
\label{lem:normestimates}
  Let $\chi_j$ and $W_j$ be as in Definition \ref{defi:localweights}
  and suppose that $\chi_j \in C^{0}_b$.
  
  Then there exist constants $0<c_1< c_2< \infty$ such that the $L^2$ norms satisfy
  \begin{align*}
    c_1 \|W\|^2 \leq \sum_j \|W_j\|^2 \leq c_2 \|W\|^2.
  \end{align*}

  Let next $N \in \N$ and suppose that $\chi_j \in C^{N}_b$. Then there exist
  constants $d_0, \dots, d_N$ with $d_N=1$ and $c_1, c_2$ such that
  \begin{align*}
    c_1 \|W\|_{H^N}^2 \leq \sum_{l=0}^{N} \sum_j \sum_{|\alpha|=l} d_l \|\p^{\alpha}W_j\|^2 \leq c_2 \|W\|_{H^N}^2.
  \end{align*} 
\end{lem}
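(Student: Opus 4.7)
The plan is to dispatch the $L^2$ case ($N=0$) by direct computation and then obtain the Sobolev case by a Leibniz expansion combined with induction on $N$. For $N=0$, the partition-of-unity identity $\sum_j \chi_j^2 \equiv 1$ gives pointwise $\sum_j |W_j|^2 = |W|^2$, so integration yields $\sum_j \|W_j\|^2 = \|W\|^2$ with $c_1 = c_2 = 1$.

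For general $N$, I would expand via Leibniz
\[
\partial^\alpha W_j = \chi_j\, \partial^\alpha W + R_{\alpha,j}(W), \qquad R_{\alpha,j}(W) := \sum_{0 < \beta \leq \alpha}\binom{\alpha}{\beta}(\partial^\beta \chi_j)(\partial^{\alpha-\beta}W),
\]
so the remainder $R_{\alpha,j}$ only involves derivatives of $W$ of order strictly less than $|\alpha|$. Two ingredients then drive the argument. First, the partition identity applied pointwise yields $\sum_j \|\chi_j \partial^\alpha W\|^2 = \|\partial^\alpha W\|^2$. Second, Lemma \ref{lem:partitions} supplies uniform-in-$j$ bounds on $\|\chi_j\|_{C^N}$ together with bounded overlap of the supports $3I_j$, which together produce $\sum_j \|R_{\alpha,j}(W)\|^2 \leq C_N \|W\|_{H^{|\alpha|-1}}^2$. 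Combining via the triangle inequality and $(a+b)^2 \leq 2(a^2+b^2)$, and setting $S_l := \sum_j \sum_{|\alpha|=l}\|\partial^\alpha W_j\|^2$, I obtain for each $0 \leq l \leq N$ the pair of bounds
\[
\sum_{|\alpha|=l}\|\partial^\alpha W\|^2 \leq 2 S_l + C_N\|W\|_{H^{l-1}}^2 \qquad \text{and} \qquad S_l \leq C_N\|W\|_{H^l}^2.
\]

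The upper bound $\sum_{l=0}^N d_l S_l \leq c_2 \|W\|_{H^N}^2$ follows immediately from the second inequality for any fixed positive weights. For the lower bound I would fix $d_N = 1$ and pick $d_{N-1}, \ldots, d_0$ successively large enough that at each level $l$ the weighted contribution $d_l S_l$ dominates $\sum_{|\alpha|=l}\|\partial^\alpha W\|^2$ modulo an absorbable multiple of $\|W\|_{H^{l-1}}^2$; this is a standard descending induction since every error constant appearing is finite, and it yields the claim with a small absolute constant $c_1$ (for instance $c_1 = 1/2$ suffices after the weights are tuned).

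The proof is largely bookkeeping; the only genuine technical point is that the commutator sums $\sum_j \|R_{\alpha,j}(W)\|^2$ must be controlled uniformly in $j$. This requires precisely the uniform $C^N$ control of $\chi_j$ and the finite-overlap property of the supports $3I_j$, both of which are guaranteed by the lower bound $|I_j| \geq 1$ from Lemma \ref{lem:partitions}; without this geometric input the constants in the equivalence could degenerate.
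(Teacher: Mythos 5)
Your proposal is correct and follows essentially the same route as the paper: the $L^2$ case by the pointwise partition identity $\sum_j \chi_j^2 \equiv 1$, and the $H^N$ case by a Leibniz expansion $\p^\alpha(\chi_j W)=\chi_j\p^\alpha W + (\text{lower-order terms})$, with the main term summing exactly to $\|\p^\alpha W\|^2$ and the remainder controlled via the uniform $C^N$ bounds on $\chi_j$ and the bounded overlap of the supports, then absorbed by induction/weight-tuning in the $d_l$. The only difference is cosmetic: the paper phrases the absorption through the induction hypothesis at order $N-1$, while you tune the weights $d_l$ by descending recursion, which amounts to the same bookkeeping.
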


\begin{proof}[Proof of Lemma \ref{lem:normestimates}]
  Since $\chi_j^2$ is a partition of unity, this estimate is actually trivially
  true with $c_1=c_2=1$ and equality.

  Moreover, if $\|A_j W\| \approx \|W\|$ with constants uniform in $j$, this also
  implies that
  \begin{align*}
    \sum_j \|A_j W_j\|^2 \approx \sum_j \|W_j\|^2 = \|W\|^2.
  \end{align*}
  Here and in the following the notation $a \approx b$ states that there exist
  constants $0<c_1<c_2<\infty$ such that $c_1 a \leq b \leq c_2 a$. 

  For $N>1$ we argue by induction.
  More precisely, for any given multi-index $\alpha$ we may expand
  \begin{align*}
    \p^{\alpha}\chi_j W = \chi_j \p^{\alpha}W + \sum_{\beta+\gamma=\alpha} (\p^{\beta}\chi_j)\p^{\gamma}W.
  \end{align*}
  By the same argument as in the $L^2$ case it holds that
  \begin{align*}
    \sum_{j}\|\chi_j \p^{\alpha}W\|_{L^2}^2 = \|\p^{\alpha}W\|_{L^2}^2.
  \end{align*}
  For all other terms we note that
  \begin{align*}
    |\p^\beta \chi_j|\leq \|\chi_{j}\|_{C^{|\beta|}_b}1_{\supp(\chi_j)}
  \end{align*}
  and that the supports of the functions $\chi_j$ at most cover $\R$ twice.
  Hence, we may control
  \begin{align*}
    \sum_{\beta+\gamma=\alpha} \|(\p^{\beta}\chi_j)\p^{\gamma}W\|_{L^2}^2 \leq \sum_{m<N} \|W\|_{H^{m}}^2 \|\chi_{j}\|_{C^{N-m}_b}^2,
  \end{align*}
  which can be controlled in terms of
  \begin{align*}
    \sum_{l=0}^{N-1} \sum_j \sum_{|\alpha|=l}d_l \|\p^{\alpha}W_j\|^2 
  \end{align*}
  by the induction assumption.

  We further remark that these comparisons remain true if $W_j$ is replaced by
  $A_j W_j$.
\end{proof}

Given this definition of an energy, we next need to verify that it indeed is
a Lyapunov functional and thus study
\begin{align*}
  \frac{d}{dt}E&= \sum_j \langle \dot{A}_j W_j, A_j W_j \rangle + \sum_j \langle A_j W_j, A_j \chi_j \dt W \rangle\\
               &= \sum_j \langle \dot{A}_j W_j, A_j W_j \rangle  + \sum_{j, j'} \langle A_j W_j, A_j \chi_j (\dv_t(\mu \nabla_t \chi_{j'}W_{j'})) \rangle \\
               & \quad - \sum_j \langle A_j W_j, A_j \chi_j (\dv_t(\mu' \nabla_t v_1)- \mu'' \p_x v_2) \rangle \\
               & \quad + \sum_j \langle A_j W_j, A_j \chi_j U'' v_2 \rangle.
\end{align*}
Compared to the results of Section \ref{sec:model2} we here encounter several additional
challenges:
\begin{itemize}
\item The Biot-Savart law is \emph{non-local}. Therefore $\chi_j v$ depends on all
  $(W_{j'})_{j'}$ not just $W_j$. We thus need to compare various localizations
  of the Biot-Savart law, while at the same time also localizing in frequency.
\item The evolution of $W_j$ hence also depends on all $(W_{j'})_{j'}$. 
\item In the dissipation term we have a double sum with respect to $j$ and $j'$.
  Here we observe that for $|j-j'|\geq 2$ the support of $\chi_j$ and
  $\chi_{j'}$ are disjoint and hence we only need to consider $j' \in
  \{j-1,j,j+1\}$ (only neighbors instead of full non-local interaction as for
  the velocity).
  However, the coupling introduced by this interaction implies that we cannot
  hope to control $\langle A_j W_j, A_j W \rangle$ in terms of itself, but
  rather have to control sums over all $j$.
\end{itemize}

The following lemma generalizes Lemma \ref{lem:dissipationterm} to the present setting.
\begin{lem}[Localized dissipation estimates]
\label{lem:dissipation2}
  Let $W \in \mathcal{S}$, then it holds that
  \begin{align}
    \label{eq:diss2}
    \begin{split}
    & \quad 0.01\sum_j \langle \dot{A}_j W_j, A_j W_j \rangle + \sum_{j} \langle A_j W_j, A_j \chi_j (\dv_t(\mu \nabla_t W)) \rangle \\
   & \leq -0.01 \sum_{j} \|\sqrt{\mu} U' (\p_z-t\p_x)A_j W_j\|^2 + \|(\mu (U')^2)^{1/6} A_j W_j\|^2.
 \end{split}
  \end{align}
\end{lem}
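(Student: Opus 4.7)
The strategy is to reduce each term in the sum to the constant-viscosity model setting of Section \ref{sec:model2} via the extensions $\mu_j, U_j$ constructed in Lemma \ref{lem:partitions}, and then control the resulting commutator errors using the bounded-overlap structure of the partition $\{\chi_j\}$.

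First I would commute the cutoff $\chi_j$ past the divergence. Since $\chi_j$ depends only on $z$, a direct computation gives
\begin{align*}
\chi_j \dv_t(\mu \nabla_t W) = \dv_t(\mu \nabla_t W_j) - U'(\p_z - t\p_x)(\mu U' \chi_j' W) - \chi_j' \mu (U')^2 (\p_z-t\p_x) W,
\end{align*}
isolating a \emph{main term} and two commutator corrections supported in $\supp(\chi_j')\subset 3I_j\setminus I_j$. For the main term, I would use that $W_j = \chi_j W$ is supported in $3I_j$, where $\mu=\mu_j$ and $U=U_j$, so $\dv_t(\mu\nabla_t W_j)=\dv_t(\mu_j\nabla_t W_j)$. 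Since the pair $(\mu_j,U_j)$ satisfies the global hypothesis of Section \ref{sec:model2}, applying Lemma \ref{lem:dissipationterm} yields
\begin{align*}
0.2\langle A_jW_j,\dot A_jW_j\rangle + \langle A_jW_j, A_j\dv_t(\mu_j\nabla_t W_j)\rangle \leq -0.1\|\mathcal{F}^{-1}(\nu_j^{1/3}+u_j^2(\xi-kt)^2+\ldots)\mathcal{F} A_jW_j\|^2.
\end{align*}
Converting back via $(\mu(U')^2)^{1/3}\approx \nu_j^{1/3}$ and $\sqrt{\mu}U'=\sqrt{\sigma U'}\approx \sqrt{\mu_j}U_j'$ on $3I_j$ produces, up to acceptable factors, the localized contributions appearing on the right-hand side of \eqref{eq:diss2}.

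For the two commutator pieces, I would integrate by parts once to move one derivative onto $A_j^2 W_j$ and then apply a Young-type splitting of Cauchy--Schwarz to bound them by
\begin{align*}
\epsilon\,\|\sqrt{\mu}U'(\p_z-t\p_x)A_jW_j\|^2 + \epsilon^{-1}\|\sqrt{\mu}U'\chi_j'W\|^2,
\end{align*}
plus manifestly lower-order pieces controlled by $\|\p_zU'/U'\|_{L^\infty}\ll 1$ coming from \eqref{eq:gradual}. Since $\chi_j'$ is uniformly bounded in $j$ and the sets $\supp(\chi_j')$ have only neighbor overlap, summation gives $\sum_j\|\sqrt{\mu}U'\chi_j'W\|^2 \lesssim \sum_{|j-j'|\leq 1}\|\sqrt{\mu}U'W_{j'}\|^2$, which is itself absorbed by the $u/(1+u^2(\xi/k-t)^2)$ multiplier decay built into $A_j$ together with a small portion of the remaining $\dot A_j$ contribution. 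This is exactly the role of the gap between the coefficient $0.2$ in Lemma \ref{lem:dissipationterm} and the coefficient $0.01$ required in \eqref{eq:diss2}.

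The main obstacle is that $A_j$ is a Fourier multiplier and hence does not preserve the physical-space support of $W_j$, so the quantities on the right-hand side of \eqref{eq:diss2}—global $L^2$ norms weighted by the globally defined $\mu,U'$—may \emph{a priori} differ from the norms naturally produced by Lemma \ref{lem:dissipationterm}, which involve the extensions $\mu_j,U_j'$ that diverge from $\mu,U'$ outside $3I_j$. Resolving this requires exploiting that $m_j$ is constant outside the bad Fourier region, so $A_j - c_j\,\Id$ is a mild Fourier multiplier whose action on the compactly supported $W_j$ yields a function effectively concentrated near $3I_j$; combined with the bounded-overlap property this allows the global RHS weights $\sqrt{\mu}U'$ and $(\mu(U')^2)^{1/6}$ to be replaced, up to negligible tails, by their local counterparts $\sqrt{\mu_j}U_j'$ and $(\mu_j(U_j')^2)^{1/6}$. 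Carrying out this tail estimate cleanly is the essential technical step that distinguishes the localized statement from its model-problem counterpart.
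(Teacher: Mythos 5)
Your proposal is correct and follows essentially the same route as the paper: you expand $\chi_j \dv_t(\mu\nabla_t W)$ into the diagonal term $\dv_t(\mu\nabla_t W_j)$, which is handled by Lemma \ref{lem:dissipationterm} applied to the extended pair $(\mu_j,U_j)$ from Lemma \ref{lem:partitions}, and you absorb the first-order commutator (supported where $\chi_j'\neq 0$, with only neighboring overlaps) via Cauchy--Schwarz/Young, the smallness of the cutoff derivatives coming from the lower bound on $|I_j|$, and the decay of $A_j$ together with the dissipation. Your closing remark on converting the constant-coefficient Fourier-weighted dissipation of the model lemma into the globally weighted norms $\sqrt{\mu}\,U'$ and $(\mu(U')^2)^{1/6}$ applied to the non-compactly-supported $A_jW_j$ addresses a point the paper's own proof leaves implicit, and is a reasonable (if only sketched) way to close that step.
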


\begin{proof}[Proof of Lemma \ref{lem:dissipation2}]
We note that in \eqref{eq:diss2} the dissipation involves $W$ and not just $W_j$
and we thus have to control the interaction with other intervals.
However, by construction only neighboring functions $\chi_j, \chi_{j'}$ with
$j'\in\{j-1,h,j+1\}$ have intersections of their support.

We thus expand
\begin{align*}
   \chi_j (\dv_t(\mu \nabla_t W)) = \dv_t(\mu \nabla_tW_j) + [\dv_t\mu \nabla_t, \chi_j] \sum_{j' \in \{j-1,j,j+1\}} W_j.
\end{align*}
Here the ``diagonal term''
\begin{align*}
  \langle A_j W_j, A_j  \dv_t(\mu \nabla_t W_j)\rangle
\end{align*}
can be controlled by using Lemma \ref{lem:dissipationterm} of Section
\ref{sec:model2}.

For the other terms we note that
\begin{align*}
  [\dv_t\mu \nabla_t, \chi_j]
\end{align*}
is a first order differential operator. In the \emph{good} region it can thus
easily be controlled by the dissipation by the same argument as in the proof of
Lemma \ref{lem:dissipationterm}.

In the \emph{bad} region we have to require that derivatives of $\chi_j$ are
not too large. As discussed in Lemma \ref{lem:partitions} this control of the
derivatives is a consequence of our assumption that $\mu$ only varies gradually
and that hence the sizes of the intervals $I_j$ is bounded below by a (large) constant.
This then implies that we can use Young's inequality to absorb
these terms into the dissipation.

This smallness is a consequence of our assumptions on $\mu$, which imply that
that each $\chi_{j}$ is supported on intervals of size at least $L$ and hence an
$n$-th order derivative is controlled in terms of $L^{-n}$, which is much
smaller than $1$.
\end{proof}

\begin{lem}[Non-local velocity estimates]
  \label{lem:velocity}
  Let $t\geq 0$, let $A, C$ and $m$ be given by Definition~\ref{defi:multiplier} and
  let $W \in H^N$ be a given function.
  Then it holds that
  \begin{align*}
    & \quad \sum_j 0.2 \langle A_jW_j, \dot{A}_j W_j \rangle - \sum_j \langle A_jW_j, A_j \chi_j (U'' v_2)  \rangle \\
    &\leq  0.1 \nu^{1/3} \sum_j\|A_jW_j\|^2 + 0.1 \sum_j\|\sqrt{\mu}\nabla_t A_jW_j\|^2
  \end{align*}

\end{lem}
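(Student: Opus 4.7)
The plan is to adapt the model-case proof of Lemma \ref{lem:velocityerrors}, where the main tool was an inviscid-style estimate replacing the true stream function by a constant-coefficient potential, and to control the new non-local coupling introduced by the Biot--Savart law across different localization intervals $I_j$.

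For each $j$ I would introduce a local constant-coefficient stream function, analogous to $\psi_A$ in Lemma \ref{lem:velocityerrors}, built using the localized shear rate $u_j := \inf_{3 I_j} U'$. Specifically, define $\psi_A^{(j)}$ by
\begin{align*}
  (\p_x^2 + u_j^2 (\p_z - t\p_x)^2) \psi_A^{(j)} = A_j W_j,
\end{align*}
and the analogous $\psi^{(j)}$ with $A_j W_j$ replaced by $W_j$. From the balance relation $\mu U' = \text{const.}$ one has $U''/U' = -\mu'/\mu$, so the hypothesis \eqref{eq:gradual} together with $U' \geq 1$ forces $\|U''/u_j\|_{W^{1,\infty}(3 I_j)}$ to be bounded by an absolute multiple of $0.001$, uniformly in $j$. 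This smallness is what will ultimately allow every resulting contribution to be absorbed.

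For the \textbf{diagonal contribution}, where $v_2$ is replaced by the piece generated by $W_j$ itself, the argument of Lemma \ref{lem:velocityerrors} applies essentially verbatim (since $\mu_j, U_j$ agree with $\mu, U$ on $3 I_j$, where $\chi_j$ is supported, and since $A_j$ is bounded above and below by Lemma \ref{lem:propertiesofm}). Testing the stream function equation against $\frac{U'}{u_j}\phi$ as in \eqref{eq:vclaim} and then applying Plancherel produces a bound of the form
\begin{align*}
  |\langle A_j W_j, A_j \chi_j U'' v_2^{\text{diag}}\rangle| \lesssim 0.001 \int \frac{|k| u_j}{k^2 + u_j^2 (\xi - kt)^2} |\mathcal{F}(A_j W_j)|^2 \, d\xi,
\end{align*}
which is absorbed by $0.1 \langle A_j W_j, \dot A_j W_j\rangle$ on $B_t$ (by construction of $m$ in Definition~\ref{defi:multiplier}) and by the decaying factor $\nu_j^{1/3}$ on $G_t$, giving an output of the form $0.1 \nu^{1/3}\|A_j W_j\|^2 + 0.1 \|\sqrt{\mu}\nabla_t A_j W_j\|^2$.

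The \textbf{main obstacle}, and the step I would handle with the greatest care, is the \emph{off-diagonal} cross-terms in which $\chi_j U''$ is paired with the velocity generated by $W_{j'}$ for $j' \neq j$. Because $\chi_j U''$ is supported in $3 I_j$, only the restriction of $v_2^{(j')}$ to $3 I_j$ enters the inner product. I would estimate this restriction via a Schur-type bound using that the Biot--Savart kernel $(-\p_x^2 - u_j^2 (\p_z - t\p_x)^2)^{-1}$ decays exponentially in $|z-z'|$ at each fixed frequency in $x$, combined with the uniform lower bound on $|I_j|$ provided by Lemma \ref{lem:partitions}. This yields a cross-interaction ``matrix'' indexed by $(j,j')$ with bounded $\ell^2 \to \ell^2$ Schur norm; multiplying by the uniform smallness prefactor $0.001$ obtained from $\|U''/u_j\|_{W^{1,\infty}}$ and applying Cauchy--Schwarz in $j$ collapses the double sum into $0.1 \nu^{1/3} \sum_j \|A_j W_j\|^2 + 0.1 \sum_j \|\sqrt{\mu} \nabla_t A_j W_j\|^2$, as claimed. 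The delicate point is that, as emphasized in the introduction following the corollary of Theorem~\ref{thm:main}, the Biot--Savart law cannot in general be weighted by the spatially varying rate $\sqrt[3]{\mu (U')^2}$; here this is avoided because the right-hand side only demands the uniform weight $\nu^{1/3}$, which matches what the Schur-type estimate actually produces.
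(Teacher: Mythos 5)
Your diagonal step follows the paper's model-case argument (Lemma \ref{lem:velocityerrors}) and is fine, but the off-diagonal step has a genuine gap, and it is precisely the step where your route departs from the paper. A flat kernel-decay/Schur bound on the cross terms estimates $\langle A_jW_j, A_j\chi_j U'' v_2^{(j')}\rangle$ by something like $\|A_jW_j\|\,\|U''\|_{L^\infty(3I_j)}\,e^{-|k|\,d(I_j,I_{j'})}\,\|W_{j'}\|$, and this cannot be collapsed into the right-hand side of the lemma. First, the Biot--Savart kernel decay produces no factor $\nu^{1/3}$ whatsoever, so your final claim that the Schur estimate ``matches'' the uniform weight $\nu^{1/3}$ is unjustified: a bound of the form $0.001\,C\sum_j\|A_jW_j\|^2$ is \emph{not} dominated by $0.1\,\nu^{1/3}\sum_j\|A_jW_j\|^2$ in the enhanced-dissipation regime $\nu^{1/3}\ll 1$, and the negative term $0.2\sum_j\langle A_jW_j,\dot A_jW_j\rangle$ only helps on the bad Fourier set and only with the specific weight $\nu_j^{1/3}+\frac{u_j}{1+u_j^2(\xi/k-t)^2}$. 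Second, normalizing by $u_j$ does not save you: $U''\approx 0.001\,U'$ is small only \emph{relative} to $U'$, while $U'$ itself is unbounded over the admissible class (it may be as large as $\sigma/\inf\mu$, with no relation to the interval separations), so the off-diagonal matrix entries behave like $u_j\,e^{-|k|\,d(j,j')}$ up to ratios $u_j/u_{j'}$, and their Schur norm is not uniformly bounded. The compensating inverse powers of $U'$ are available only if the velocity is kept in the weighted stream-function form $u\|(\p_x,u(\p_z-t\p_x))\psi_A\|$ that matches the $\dot A_j$ multiplier --- exactly the structure your Cauchy--Schwarz in plain $L^2$ discards.

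This is also where the paper proceeds differently: it never performs a $(j,j')$ decomposition of the velocity at all. Instead it generalizes the elliptic estimates by testing $\Delta_j\phi_j=W_j$ and $\Delta\phi=W$ with $\phi_j$ and $\phi$, obtaining $\|\nabla\phi\|^2\approx\sum_j\|\nabla_j\phi_j\|^2$ (and its weighted analogue), so that the full non-local velocity error is controlled by the sum of the \emph{local, correctly weighted} velocity energies, each of which is then absorbed into the decay of $A_j$ and the dissipation exactly as in the model case. If you want to keep your pairwise approach, you would need to integrate by parts so that both the $j$-side and the $j'$-side enter through weighted stream-function norms (absorbable by $\dot A_j$ and $\dot A_{j'}$ respectively), with the exponential kernel decay used only to beat the ratio $u_j/u_{j'}\le e^{0.003\,d}$; as written, the proposal does not do this, and the claimed final inequality does not follow.
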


\begin{proof}[Proof of Lemma \ref{lem:velocity}]
  We again observe that here the right-hand-side depends on all of $W$.
  However, unlike in Lemma \ref{lem:dissipation2} here $\chi_j v_2$ depends on
  $W_{j'}$ for all $j'$ and not just $j' \in \{j-1,j,j+1\}$.

  Instead of estimating in terms of $j'$ as in Lemma \ref{lem:dissipation2}, we
  generalize the elliptic estimates of \cite{coti2019degenerate} to the present
  setting.

  More precisely, let $\phi_j$ be the stream function generated by $W_j$:
  \begin{align*}
    \Delta_j \phi_j = W_j= \chi_j W,
  \end{align*}
  and let $\phi$ denote the stream function generated by $W$:
  \begin{align*}
    \Delta \phi= W = \sum_j \chi_j W_j.
  \end{align*}
  Then by testing the above equations with $-\phi_j$ and $-\phi$, respectively,
  we observe that
  \begin{align*}
    \|\nabla_j \phi_j\|^2 \leq \|\nabla_j \phi_j\| \|\nabla (\chi_j \phi)\|
  \end{align*}
  and
  \begin{align*}
    \|\nabla \phi\|^2 \leq \sum_j \|\nabla (\chi_j\phi)\| \|\nabla_j \phi_j\|.
  \end{align*}
  Using the fact that derivatives of $\chi_j$ are bounded, it thus follows that
  \begin{align*}
    \|\nabla \phi\|^2 \approx \sum_j \|\nabla_j \phi_j\|^2.
  \end{align*}
  Thus errors in velocity can be controlled in terms of sums of $\nabla_j
  \phi_j$ (see also Lemma \ref{lem:normestimates}). Moreover, the above argument extends to considering weighted spaces.

  In order to conclude, we note that by the definition of $U_j, \mu_j$ and $W_j$ each such contribution can be controlled in terms of the decay of the multiplier $A_j$ and the dissipation.
  Hence the velocity errors can be absorbed.
\end{proof}

 \begin{lem}[Viscosity errors]
\label{lem:viscosity}
   Let $t\geq 0$, let $A, C$ and $m$ be given by Definition~\ref{defi:multiplier} and
let $W \in H^N$ be a given function.
Then it holds that
\begin{align*}
  & \quad 0.2 \sum_j\langle A_jW_j, \dot{A}_j W_j \rangle - \sum_j \langle A_jW_j, A_j\chi_j (\dv_t(\mu' \nabla_t)v_1+ \mu'' \p_x v_2)  \rangle \\
  & \leq  0.1 \nu^{1/3} \sum_j\|A_jW_j\|^2 + 0.1 \sum_j\|\sqrt{\mu}\nabla_t A_jW_j\|^2
\end{align*}
\end{lem}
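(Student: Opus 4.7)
The plan is to combine the model-case estimate of Lemma \ref{lem:viscosityerrors} with the localization framework established in Lemmas \ref{lem:partitions}, \ref{lem:dissipation2} and \ref{lem:velocity}. The key structural feature is that the viscosity error terms $\dv_t(\mu' \nabla_t v_1)$ and $\mu'' \p_x v_2$ carry explicit smallness factors coming from \eqref{eq:gradual}, namely $\|\mu'/\mu\|_{W^{1,\infty}} < 0.001$, which is precisely the mechanism exploited in Lemma \ref{lem:viscosityerrors} in the model setting. My task is to show that after localization this smallness survives the interaction with the partition of unity and the non-local Biot--Savart law.

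First, I would move $\chi_j$ inside each of the error terms. Writing, for instance,
\begin{align*}
\chi_j \dv_t(\mu' \nabla_t v_1) = \dv_t(\mu' \nabla_t (\chi_j v_1)) + [\chi_j,\dv_t(\mu'\nabla_t \cdot)]v_1,
\end{align*}
the commutator is a first-order differential operator whose coefficients involve $\chi_j'$ and $\chi_j''$ multiplied by $\mu'$ or $\mu''$. By Lemma \ref{lem:partitions} the intervals $I_j$ are bounded below in length (this is a direct consequence of the gradual variation of $\mu$), so derivatives of $\chi_j$ are small and absorb into the dissipation right-hand side via Young's inequality exactly as in the proof of Lemma \ref{lem:dissipation2}. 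An analogous splitting handles $\chi_j \mu'' \p_x v_2$.

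Second, on the support of $\chi_j$ we have $\mu = \mu_j$ and $U = U_j$ by construction, so the main terms are formally identical to those treated in Lemma \ref{lem:viscosityerrors} but with $A, \mu, U$ replaced by $A_j, \mu_j, U_j$. The argument there splits into a \emph{good} Fourier region, where the bound reduces to observing that $(1+u_j^2(\xi/k-t)^2)^{-1} \leq (1+G_j^2)^{-1} \ll \nu_j^{1/3}$, and a \emph{bad} Fourier region, where the decay of $\dot{A}_j/A_j$ is precisely calibrated to absorb the contribution $u_j \|(\p_x,u_j(\p_z-t\p_x))\psi_A\|^2$. Both computations go through verbatim term-by-term for each $j$.

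Third, and this is where the argument is most delicate, the perturbation $v_1, v_2$ is defined from the \emph{global} vorticity $W$ and is therefore not localized to $I_j$. To replace $\chi_j v$ by a sum of localized velocities we invoke the elliptic comparison of Lemma \ref{lem:velocity}, which gives
\begin{align*}
\|\nabla \phi\|^2 \approx \sum_{j} \|\nabla_j \phi_j\|^2,
\end{align*}
and an analogous relation in weighted spaces where the weight is a polynomial in $\p_x$ and $u_j(\p_z - t\p_x)$. Applying this identity to the velocity inputs of each viscosity error, the non-local contribution decomposes into a sum of local contributions each of which falls under the scope of the model-case estimate. Summing the resulting bounds over $j$ and using the comparability of $\sum_j \|A_j W_j\|^2$ with the localized dissipation via Lemma \ref{lem:normestimates}, we reach the claimed inequality.

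The main obstacle I expect is not the commutators (routine once $\chi_j'$ is known to be small), but rather the interplay between the non-locality of $v$ and the smallness factors $\mu', \mu''$: one must verify that the elliptic transfer from $\phi$ to $\{\phi_j\}$ does not lose factors that would spoil the smallness, and in particular that the weighted stream-function estimates remain uniform in $j$ despite $\mu_j$ and $U'_j$ varying between intervals. Here the global bound $\sup(\mu)^2/\inf(\mu) < 0.1$ and the uniform-in-$j$ bound $\sup \mu_j/\inf \mu_j \leq 100$ from Lemma \ref{lem:partitions} are what ensures that the implicit constants in the elliptic comparison can be taken independent of $j$.
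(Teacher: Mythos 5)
Your proposal follows essentially the same route as the paper's proof: it combines the model-case estimate of Lemma \ref{lem:viscosityerrors} on each interval, the commutator/localization mechanism of Lemma \ref{lem:dissipation2} (smallness of derivatives of $\chi_j$ from the lower bound on interval lengths), and the localized elliptic comparison of Lemma \ref{lem:velocity} to handle the non-local velocity, then sums the diagonal estimates over $j$. This matches the paper's argument, and your added remarks on uniformity in $j$ of the elliptic transfer are consistent with it rather than a deviation.
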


\begin{proof}[Proof of Lemma \ref{lem:viscosity}]
In order to prove these estimates we employ a combination of the methods used in
the proofs of Lemmas \ref{lem:viscosityerrors}, \ref{lem:dissipation2} and
\ref{lem:velocity}.

More precisely, we first use the structure of the Biot-Savart law to express
\begin{align*}
  (\dv_t(\mu' \nabla_t)v_1+ \mu'' \p_x v_2) 
\end{align*}
in terms of $W$ and lower order terms.
For the terms involving $W$ we can then argue analogously as in Lemma
\ref{lem:viscosityerrors}, using the decoupling of $\chi_j$ and $\chi_{j'}$ if
$j$ and $j'$ are far apart as in Lemma \ref{lem:dissipation2}.

Finally, for the remaining terms involving the velocity, we argue as in Lemma
\ref{lem:velocity} and thus reduce to estimating $\nabla_j \phi_j$ in place of
$v$. Summing over the ``diagonal'' estimates as established in Lemma
\ref{lem:viscosityerrors} then yields the result.
\end{proof}

Having establised these estimates, we are now ready to prove Proposition
\ref{prop:L2} and thus also prove part of Theorem \ref{thm:main}. An extension
of these results to higher Sobolev norms $H^N$ is given in Section
\ref{sec:proofmain}, which then completes the proof of Theorem \ref{thm:main}.

\begin{proof}[Proof of Proposition \ref{prop:L2}]
Let $\omega_0 \in L^2(dz)$ be a given initial datum, let $\mu, U$ satsify the
asssumptions of Theorem \ref{thm:main} and let $W$ denote the solution of
\eqref{Equation:W}
\begin{align*}
    &  \d_t W-U''   V_2
  =\dv_t (\mu \nabla_t  W) 
  -\dv_t(\mu' \nabla_t   v_1)
  -\mu'' \d_x v_2.
\end{align*}
with this initial data, where 
   \begin{align}
      v_1&=\frac{-U'(\d_z-t\d_x)}{\d_x^2+(U'(\d_z-t\d_x))^2}  W, \\
    \quad   v_2&=\frac{\d_x}{\d_x^2+(U'(\d_z-t\d_x))^2} W.
    \end{align}
Then by Lemma \ref{lem:partitions} there exists a parition of $\R$ into
intervals $I_j$ and an associated partition of unity $\chi_j^2$.
We then define $A_j$ and
\begin{align*}
  W_j:=\chi_j W
\end{align*}
as in Definition \ref{defi:localweights} and study the evolution of the energy
\begin{align*}
  E(t):= \sum_j \langle A_jW_j, A_j W_j \rangle.
\end{align*}
Inserting the evolution equation \eqref{Equation:W} we then have to estimate
\begin{align*}
  \frac{d}{dt}E(t) &= 2\sum_j  \langle \dot{A}_jW_j, A_j W_j \rangle \\
  &\quad +  2\sum_j  \langle A_jW_j, A_j \chi_j (\dv_t (\mu \nabla_t  W)  \rangle\\
  &\quad - 2\sum_j  \langle A_jW_j, \chi_j \dv_t(\mu' \nabla_t   v_1)\rangle\\
  &\quad - 2\sum_j  \langle A_jW_j, A_j \chi_j \mu'' \d_x v_2\rangle.
\end{align*}
Combining the estimates of each summand, derived in Lemmas
\ref{lem:normestimates} to \ref{lem:viscosity} we deduce that
\begin{align*}
   \frac{d}{dt}E(t) \leq -0.01 \sum_j \left( \nu_j^{1/3}\|W_j\|^2 +\|\sqrt{\mu_j}U_j'(\p_{z_j}-t\p_x)W_j\|^2 + u_j\|(\p_x, u_j(\p_{z_j}-t\p_x)) \psi_{A_j}\|^2 \right).
\end{align*}
Finally, we recall that $\mu_j, U'_j, z_j$ agree with $\mu, U',z$ on each interval
$I_j$ and that by Lemma \ref{lem:normestimates} the energy $E(t)$ is comparable
to $\|W(t)\|_{L^2(dz)}^2$.
This hence concludes the proof of Proposition \ref{prop:L2} where the symmetric
operator $A$ is defined such that
\begin{align*}
  \|A(t)W(t)\|^2 := E(t).
\end{align*}
\end{proof}

\section{Stability in $H^N$}
\label{sec:proofmain}

As the last step of our proof of Theorem \ref{thm:main}, in this section we
extend the stability and damping estimates in $L^2$ established in Section
\ref{sec:partition} to estimates in $H^N$.
Here we follow an inductive approach introduced in \cite{zillinger2021linear} in
the inviscid setting.
We consider the linearized equations \eqref{Equation:W}
\begin{align*}
  \d_t W &= U''   v_2 + \dv_t (\mu \nabla_t  W) 
    -\dv_t(\mu' \nabla_t   v_1)
    -\mu'' \d_x v_2 =: L W, \\
        v_1&=\frac{-U'(\d_z-t\d_x)}{\d_x^2+(U'(\d_z-t\d_x))^2}  W, \\
      v_2&=\frac{\d_x}{\d_x^2+(U'(\d_z-t\d_x))^2} W,
\end{align*}
where we introduced the time-dependent linear operator $L$ for brevity of notation. 
We remark that derivatives with respect to $x$ can be identified with
multiplication by $ik$, since the linearized equations decouple with respect to
$k$. Hence higher derivatives in $x$ can be estimated using the $L^2$ energy. In
the following we hence only consider derivatives with respect to $z$.
Applying $N$ derivatives to \eqref{Equation:W} we obtain that
\begin{align}
  \label{eq:WN}
  \d_t \p_z^N W = L \p_z^N W + [L,\p_z^N]W. 
\end{align}
In the following lemma we then that the commutator term can be considered
an error term involving fewer than $N$ derivatives, while $L \p_z^N W$ can be
treated in the same way as in the $L^2$ estimate. In this sense the $L^2$
estimate forms the core of our argument.

\begin{prop}
  \label{prop:HN}
  Let $\mu,U$ satisfy the assumptions of Theorem \ref{thm:main}.
  In particular, let $N \in \N$ and suppose that $\p_z \ln(\mu) \in W^{N+1,\infty}$.
  Let $A$ be as in Proposition \ref{prop:L2}, then there exist constants
  $c_0,c_1,\dots, c_N>0$ depending only on the $W^{k,\infty}$ norms of
  $\p_z \ln(\mu)$ such that
  \begin{align*}
    E_N(t)= \sum_{l\leq N} c_l\langle A \p_z^lW, A \p_z^l W \rangle
  \end{align*}
  is a Lyapunov functional and satisfies
  \begin{align*}
    \frac{d}{dt} E(t)\leq -0.01 (\|\sqrt{\mu}(\p_z-t\p_x) \p_z^N W\|_{L^2}^2 + \|(\mu (U')^2)^{\frac{1}{6}}\p_z^NW\|_{L^2}^2).
  \end{align*}
\end{prop}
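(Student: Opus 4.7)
The natural approach is induction on $N$, with Proposition \ref{prop:L2} serving as the base case $N=0$. Applying $\p_z^N$ to \eqref{Equation:W} yields \eqref{eq:WN}, so the time derivative of $\langle A \p_z^N W, A \p_z^N W\rangle$ decomposes into a \emph{main part} $2\langle A \p_z^N W, \dot A \p_z^N W\rangle + 2\langle A \p_z^N W, A L \p_z^N W\rangle$ and a \emph{commutator part} $2\langle A \p_z^N W, A[L,\p_z^N]W\rangle$. The main part has exactly the structure treated in Proposition \ref{prop:L2} applied to $\p_z^N W$ in place of $W$, and therefore contributes the desired top-order dissipation
\[
-0.01\bigl(\|\sqrt{\mu}(\p_z-t\p_x)\p_z^N W\|^2 + \|(\mu(U')^2)^{1/6}\p_z^N W\|^2\bigr),
\]
up to an absolute constant. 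All subsequent work is dedicated to showing that the commutator part is an error.

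Unpacking $L$ into its transport, velocity-correction, principal dissipation, and viscosity-correction pieces and applying the Leibniz rule, each summand of $[L,\p_z^N]W$ carries at least one derivative on a coefficient ($\mu$, $\mu'$, $U'$, $U''$) and at most $N+1$ derivatives on $W$. Using the balance $\mu U' = \text{const.}$, every such coefficient derivative is a polynomial in $\p_z\ln\mu$ and its derivatives, which by hypothesis lie in $W^{N+1,\infty}$ and are \emph{small} at first order thanks to \eqref{eq:gradual}. The most dangerous summand, carrying $N+1$ derivatives of $W$, arises from $[\dv_t(\mu\nabla_t), \p_z^N]$ and is controlled pointwise by $\|\p_z\ln\mu\|_{L^\infty}$ times $\sqrt{\mu}(\p_z-t\p_x)\p_z^N W$ plus lower-order terms; the smallness of $\p_z\ln\mu$ together with Young's inequality absorbs it into the top-order dissipation on the right-hand side. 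All remaining summands involve at most $N$ derivatives of $W$ and are bounded, again by Young's inequality, by an arbitrarily small multiple of the top-order dissipation plus a constant times the dissipation quantities appearing in $\tfrac{d}{dt}\|A\p_z^l W\|^2$ for $l\leq N-1$. Setting $c_N = 1$ and then choosing $c_{N-1}, c_{N-2}, \dots, c_0$ inductively large enough ensures that these level-$N$ commutator errors are dominated by the negative contributions already extracted from $c_l \cdot \tfrac{d}{dt}\|A\p_z^l W\|^2$ at lower levels.

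The main obstacle will be the commutators involving the non-local velocity, namely $[U''v_2, \p_z^N]$ and $[\dv_t(\mu'\nabla_t v_1), \p_z^N]$, since derivatives must be commuted past the Biot--Savart operator $(\p_x^2 + (U'(\p_z-t\p_x))^2)^{-1}$ whose coefficient $U'$ is itself variable. Following the strategy of Lemmas \ref{lem:velocityerrors} and \ref{lem:velocity}, I would replace the genuine stream function by the constant-coefficient analogue $\psi_A$ generated by the local mean $u$ of $U'$, absorbing the resulting error via the smallness of $\p_z\ln\mu$. Commutation of $\p_z^N$ with the explicit Fourier multiplier defining $\psi_A$ is then a constant-coefficient operation, producing at worst the symbol $\tfrac{1}{k^2 + u^2(\xi-kt)^2}$ already controlled in Proposition \ref{prop:L2}. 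Performed within the localized partition of Section \ref{sec:glue}, this yields bounds of exactly the same shape as in the $L^2$ case at every derivative level, and closes the induction.
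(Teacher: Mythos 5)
Your proposal follows essentially the same route as the paper: induction on $N$ with the weighted energy $E_N=\sum_l c_l\|A\p_z^l W\|^2$, treating $L\p_z^NW$ plus $\dot A$ exactly as in the $L^2$ estimate of Proposition \ref{prop:L2}, expanding $[L,\p_z^N]$ by Leibniz into lower-order terms with coefficients built from derivatives of $\p_z\ln\mu$, handling the non-local velocity commutators through the constant-coefficient stream-function comparison of Lemmas \ref{lem:velocityerrors} and \ref{lem:velocity} within the localized framework, and closing with Young's inequality and a suitable hierarchy of the constants $c_l$ (your normalization $c_N=1$ with large lower coefficients is equivalent to the paper's choice of small $c_N$). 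This matches the paper's argument, so no gap to report.
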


We remark that here we only require that the $W^{N+1,\infty}$ norm is finite.
Only the $W^{1,\infty}$ needs to be small in order to establish the $L^2$
stability estimate.

\begin{proof}[Proof of Prosposition \ref{prop:HN}]
The case $N=0$ has been established in Proposition \ref{prop:L2} with $c_0=1$.
We hence aim to proceed by induction. Hence, suppose that the estimates have
been established for the case $N-1$ and consider
\begin{align*}
  E_N(t)= c_N \langle A \p_z^N W, A \p_z^N W \rangle + E_{N-1}(t)+ E_{N-2}(t)+\dots +E_{0}(t)
\end{align*}
with $c_N$ to be determined later.

Then by the induction assumption it holds that
\begin{align}
  \label{eq:AN1}
  \frac{d}{dt} E_{l}(t) \leq -0.01 (\|\sqrt{\mu}(\p_z-t\p_x) \p_z^{l} W\|_{L^2}^2 + \|(\mu (U')^2)^{\frac{1}{6}}\p_z^{l}W\|_{L^2}^2)
\end{align}
for all $0\leq l\leq N-1$
In particular, all derivatives of $W$ up to order $N-1$ can be controlled by the induction assumption.
We thus turn to the control of the ``leading order'' term involving $\p_z^NW$.
Here, by the $L^2$ estimates of Proposition \ref{prop:L2} it holds that
\begin{align}
  \label{eq:AN}
  \begin{split}
  \frac{d}{dt} c_N \langle A \p_z^N W, A \p_z^N W \rangle &= 2c_N \langle \dot{A} \p_z^N W, A \p_z^N W \rangle + 2c_n \langle A \p_z^N W, A L \p_z^N W \rangle \\
                                                          & \quad +  2c_N \langle A \p_z^N W, A [L, \p_z^N] W \rangle \\
                                                          &\leq -0.01 c_N (\|\sqrt{\mu}(\p_z-t\p_x) \p_z^{N-1} W\|_{L^2}^2 + \|(\mu (U')^2)^{\frac{1}{6}}\p_z^{N-1}W\|_{L^2}^2)\\
                                                          & \quad +  2c_N \langle A \p_z^N W, A [L, \p_z^N] W \rangle.
                                                        \end{split}
\end{align}
Combining the estimates \eqref{eq:AN} and \eqref{eq:AN1} it thus suffices to
show that for a suitable choice of $c_N$ we may absorb the commutation error
\begin{align*}
  2c_N \langle A \p_z^N W, A [L, \p_z^N] W \rangle.
\end{align*}
into the decay
\begin{align*}
  -0.01 c_N (\|\sqrt{\mu}(\p_z-t\p_x) \p_z^{N-1} W\|_{L^2}^2 + \|(\mu (U')^2)^{\frac{1}{6}}\p_z^{N-1}W\|_{L^2}^2) \\
  -0.01 \sum_{l<N} (\|\sqrt{\mu}(\p_z-t\p_x) \p_z^{l} W\|_{L^2}^2 + \|(\mu (U')^2)^{\frac{1}{6}}\p_z^{l}W\|_{L^2}^2)
\end{align*}
Let us first discuss the main dissipation term of $L$. Here we may iteratively expand
\begin{align*}
  [\dv_t(\mu \nabla_t), \p_z^N]W &= [\dv_t(\mu \nabla_t), \p_z] \p_z^{N-1}W \\
  &\quad + [\dv_t(\mu \nabla_t), \p_z^{N-1}] \p_z^{1}W + [[\dv_t(\mu \nabla_t), \p_z^{N-1}], \p_z]W \\
                                &= \sum_{l<N} E_l \p_z^lW,
\end{align*}
where the operators $E_l$ are second order elliptic operators whose coefficient functions may
be explicitly computed in terms of derivatives of $U'$ and $\mu$ up to order
$N-l$.
In order to estimate 
\begin{align*}
  \langle A\p_z^N W, A [\dv_t(\mu \nabla_t), \p_z^N]W \rangle = \sum_{l<N} \langle A \p_z^N, A E_l \p_z^l W \rangle
\end{align*}
we may thus argue as in the proof of Lemma \ref{lem:dissipation2} and control
\begin{align}
  \label{eq:dissipationN}
  c_N \sum_{l<N} \langle A \p_z^N, A E_l \p_z^l W \rangle &\leq c_N \|\sqrt{\mu}\nabla_t A \p_z^NW\|_{L^2} \sum_{l} d_l \|\sqrt{\mu}\nabla_t \p_z^kl W\|_{L^2}.
\end{align}

Similarly we may iterative expand the equation satisfied by derivatives of the
stream function
\begin{align*}
  \Delta_t \p_z^N \phi = \p_z^N W + [\Delta_t, \p_z^N]\phi
\end{align*}
and thus obtain that
\begin{align*}
  \p_z^N \phi = \Delta_t^{-1} \p_z^NW + \Delta_t^{-1} \sum_{l<N} \tilde{E_l} \Delta_t^{-1} \p_z^l W, 
\end{align*}
where the second order operators $\tilde{E}_l$ may again be explicitly computed.
Thus, we may argue as in the proofs of Lemmas \ref{lem:velocity} and
\ref{lem:viscosity} and again use Hölder's and Young's inequality to control
\begin{align}
  \label{eq:velocityN}
  \begin{split}
&\quad  \langle A\p_z^N W, A [\dv_t(\mu' \nabla_t)U'(\p_z-t\p_x)\Delta_t^{-1}- \mu'' \p_x \Delta_t^{-1} + U'' \p_x \Delta_t^{-1}, \p_z^N]W \rangle \\
&\leq  (\|\sqrt{\mu}(\p_z-t\p_x) \p_z^{N} W\|_{L^2} + \|(\mu (U')^2)^{\frac{1}{6}}\p_z^{N}W\|_{L^2}) \\
&\quad \quad \sum_{l} (\|\sqrt{\mu}(\p_z-t\p_x) \p_z^{l} W\|_{L^2} + \|(\mu (U')^2)^{\frac{1}{6}}\p_z^{l}W\|_{L^2}).
\end{split}
\end{align}

We may thus conclude our estimate by using Young's inequality. More precisely,
we first apply Young's inequality to the estimates \eqref{eq:dissipationN}
and \eqref{eq:velocityN} so that the contributions due to $\p_z^NW$ can be
bounded by
\begin{align*}
  0.00001 c_{N} (\|\sqrt{\mu}(\p_z-t\p_x) \p_z^{N} W\|_{L^2}^2 + \|(\mu (U')^2)^{\frac{1}{6}}\p_z^{N}W\|_{L^2}^2) 
\end{align*}
and can thus be absorbed into the decay in estimate \eqref{eq:AN}.
Then, choosing $c_N$ sufficiently small the remaining terms obtained in the
application of Young's inequality can be absorbed into the decay by \eqref{eq:AN1}.
This concludes the proof.
\end{proof}

\section*{Acknowledgments}
Funded by the Deutsche Forschungsgemeinschaft (DFG, German Research Foundation) – Project-ID 258734477 – SFB 1173.

\bibliography{references.bib,citations2.bib}
\bibliographystyle{alpha}

\end{document}